\newcommand{\E}{\mathbf{E}}
\renewcommand {\epsilon}{\varepsilon}
\newcommand{\EE}{\mathbb{E}}
\newcommand{\NN}{\mathbb{N}}
\newcommand{\PP}{\mathbb{P}}
\newcommand{\RR}{\mathbb{R}}
\newcommand{\TT}{\mathbb{T}}
\newcommand{\aA}{\mathcal{A}}
\newcommand{\bB}{\mathcal{B}}
\newcommand{\cC}{\mathcal{C}}
\newcommand{\eE}{\mathcal{E}}
\newcommand{\fF}{\mathcal{F}}
\newcommand{\iI}{\mathcal{I}}
\newcommand{\oO}{\mathcal{O}}
\newcommand{\sS}{\mathcal{S}}
\newcommand{\tT}{\mathcal{T}}
\newcommand{\fB}{\mathfrak{B}}
\newcommand{\e}{\varepsilon}
\newcommand{\la}{\lambda}
\newcommand{\pd}{\partial}
\newcommand{\ra}{\rightarrow}
\newcommand{\ti}{\tilde}
\newcommand{\ds}{\displaystyle}
\newcommand{\ind}{\mathbf{1}}
\newcommand{\lqq}{\leqslant}
\newcommand{\gqq}{\geqslant}
\newtheorem{thm}{Theorem}[section]
\newtheorem{dfn}{Definition}[section]
\newtheorem{prp}{Proposition}[section]
\newtheorem{cor}{Corollary}[section]
\newtheorem{lem}{Lemma}[section]
\newtheorem{rem}{Remark}[section]
\DeclareMathSymbol{\ophi}{\mathalpha}{letters}{"1E}
\renewcommand{\phi}{\varphi}
\newcommand{\be}{\begin{equation}}
\newcommand{\ee}{\end{equation}}
\newcommand{\ben}{\begin{equation*}}
\newcommand{\een}{\end{equation*}}
\newcommand{\ba}{\begin{equation}\begin{aligned}}
\newcommand{\ea}{\end{aligned}\end{equation}}
\DeclareMathOperator{\trace}{trace}
\DeclareMathOperator{\supp}{supp}
\DeclareMathOperator{\dist}{dist}
\newenvironment{proof}{\par\noindent{\bf Proof:}}{\hfill$\blacksquare$\par}
\newcommand{\cA}{\mathcal{A}}
\newcommand{\bI}{\mathbf{1}} 
\newcommand{\bR}{\mathbb{R}}
\newfont{\cyrfnt}{wncyr10}
\def\J3{\cyrfnt{\rm \u{\cyrfnt I}}}
\def\j3{\cyrfnt{\rm \u{\cyrfnt i}}}
\definecolor{DarkGreen}{rgb}{0.1,0.7,0.3}   
\definecolor{DarkGreen}{rgb}{0.1,0.7,0.3}   
\begin{document}
\title{The exit problem from the neighborhood of a global attractor 
for heavy-tailed L\'evy diffusions 
}

\date{\null}

\author{Michael H\"ogele\footnote{Institut f\"ur Mathematik, Universit\"at Potsdam,
Am Neuen Palais 10, 14465 Potsdam, Germany; hoegele@uni-potsdam.de}  \ 
and 
Ilya Pavlyukevich\footnote{Institut f\"ur Stochastik, Friedrich--Schiller--Universit\"at Jena, Ernst--Abbe--Platz 2, 
07743 Jena, Germany; ilya.pavlyukevich@uni-jena.de}
}

\maketitle

\begin{abstract}
We consider a finite dimensional deterministic dynamical system with a global attractor 
$\aA$ with a unique ergodic measure $P$ concentrated on it, 
which is uniformly parametrized by the mean of the trajectories in a bounded set $D$ containing $\aA$. 
We perturbe this dynamical system by a multiplicative heavy tailed L\'evy noise of small intensity $\e>0$
and solve the asymptotic first exit time and location problem 
from a bounded domain $D$ around the attractor $\aA$ in the limit of $\e\searrow 0$. 
In contrast to the case of Gaussian perturbations,
the exit time has the asymptotically algebraic exit rate as a function of $\e$, 
just as in the case when $\aA$ is a stable fixed point (see for instance \cite{DHI13,ImkellerP-06,Pavlyukevich11}).
In the small noise limit, we determine the joint law of the first time  and the exit location from $D^c$.  
As an example, we study the first exit problem from a neighbourhood of a stable limit cycle for the Van der Pol oscillator 
perturbed by multiplicative $\alpha$-stable L\'evy noise.

\end{abstract}

\noindent \textbf{Keywords:}  global attractor; regular variation;
$\alpha$--stable L\'evy process; multiplicative noise;
It\^o SDE; Stratonovich SDE; canonical (Marcus) SDE; 
first exit time; first exit lo\-cation; Van der Pol oscillator.

\noindent \textbf{2010 Mathematical Subject Classification: } 60H10; 60G51; 37A20; 60J60; 60J75; 60G52.

\section{Introduction}

This article studies perturbations of finite dimensional  dynamical systems by small multiplicative L\'evy noise with 
heavy-tailed large jumps with the focus on the exit behavior
from a bounded neighborhood of those global attractor. The scenario we shall study is as follows. 

Let us consider a $d$-dimensional deterministic dynamical system $\dot u=f(u)$ 
on a positively invariant 
bounded domain $D$. 
We assume that the dynamical system has a global attractor $\aA$ in $D$
and that uniformly over the initial conditions in $D$ the time averages of the trajectories 
converge to a unique invariant measure $P$ on $\cA$.
The most prominent examples of systems satisfying these settings are dynamical systems 
with a stable fixed point $\aA= \{\mathfrak{s}\}$ or a stable limit cycle $\aA = \oO$. 
Clearly, in this case the paths of the dymamical system never leave $D$. 

This situation changes significantly in the presence of a perturbation 
by noise, however small its intensity $\e>0$ may be. 
In the generic situation, the perturbed solution always exits from $D$. However the growth rate 
of the exit time shows an asymptotic behavior that strongly depends of the nature of the noise. 
Without any doubt, beginning with the pioneering works by Kramers \cite{Kramers-40} and Freidlin and Wentzell \cite{FW70}, 
the case of Gaussian perturbations has been studied quite exhaustingly 
in the realm of the large deviation theory. 
The literature on large deviation principles is enormous and representative examples 
for finite and infinite dimensional systems contain the works 
\cite{Br91,Br96,BovierEGK-04, Day-83, FL82, Fr88}. 
where perturbed gradient dynamical systems were mainly considered. 
For the case of non-gradient and degenerate systems we refer to \cite{BerglundG-04, Da96, FreidlinW-98}. 
They all have in common that the first exit time rate grows in $\e$ with the order $\exp(\bar V/\e^2)$, in physics literature known 
as Kramer's law, where is $\bar V$ the minimal amount of energy needed for a Brownian path to steer the perturbed system 
from the attractor $\aA$ to a point on the boundary $\partial D$. In other words, 
$\bar V$ depends only on the dynamical system outside the attractor. 
The dynamics on the attractor, where no energy is needed to travel, 
is irrelevant. 

The exit scenario changes fundamentally if the perturbation is a L\'evy process, 
with power tailed (heavy tailed) large jumps. 
In this case, the large jumps determine the exit behavior: It is possible to perform a time scales separation of big jumps 
from the small jumps and the Gaussian component such that on the new time scale the system's small noise 
behavior becomes essentially one 
of a deterministic system perturbed by large jumps. 
Using this approach,
the gradient case or the case with point attractors in finite and infinite dimensional systems 
has been treated in \cite{DHI13, Godovanchuk-82,ImkellerP-06,ImkPavSta-10,Pavlyukevich11}. 
Since the deterministic system converges to the stable state fast enough 
in comparison to the occurence rate of large jumps, 
the exit occurs when a system jumps from a vicinity of $\mathfrak{s}$. 
The resulting exit rate turns out to be of a power order with respect to $1/\e$, 
and  the asymptotic exit location in $D^c$ is given by the probability 
distribution of large jump increments conditioned to $D^c-\mathfrak{s}$. 
This is radically different from the case of Gaussian perturbations, where the exit occurs only on the boundary of $D$ 
due to the continuity of the paths.   

In the present paper, we generalize these results to the case where the global attractor $\aA$ in $D$ 
is not necessarily a stable point. Once again the essential exit behavior is determined 
by the deterministic system perturbed by large jumps. 
However we face the problem that --- opposite to the case of gradient systems --- 
the convergence of the deterministic trajectory 
to a hyperbolic attractor as a set does not imply the convergence 
towards a trajectory on the attractor. 
Instead, what replaces the deterministic 
control of the trajectory is its ergodic behavior, 
that is its ``occupation statistics'' of its time-average on the attractor.  
In this sense the exit event will be asymptotically triggered by the large jumps 
starting on $\aA$ under the invariant measure $P$. 
The exit rate is again of a power order in $1/\e$, but the precise prefactor 
depends now on the large jump distribution 
and the ergodic measure $P$ concentrated on $\aA$. 
The distribution of the exit location is hence given by the probability distribution 
for large jumps conditioned to $D-v$, where $v$ is averaged over $P$ on $\aA$. 
Therefore contrary to the aforementioned Gaussian case, 
the deterministic dynamics on the attractor turns out to be crucial 
for the asymptotics of the exit times.  

We can make this intuition rigorous for a very general class of additive and multiplicative 
L\'evy noises with a regularly varying L\'evy measure of index $-\alpha$, $\alpha >0$. 
In particular, our main result covers perturbations in It\^o and Stratonovich, as well as in the canonical (Marcus) 
integrals sense, where jumps in general do not occur along straight lines, 
but follow the flow of a vector field which determines the multiplicative noise.

Limit cycles attractors perturbed by Gaussian noise are considered in the physics and other natural sciences 
literature since quite some time \cite{EFSV85, GM88, HLP09, KurSch91, LC98, SV87}. As an application of our main result 
we work out the example of the Van der Pol oscillator perturbed by multiplicative $\alpha$-stable noise. 

It has been well-known for a long time, that the first exit time and location problem 
for general Markov processes can be stated in terms a Poisson and Dirichlet problem 
of the generator of this process, consult for instance \cite{Wentzell-90}.
However, the generators of the jump part in the case of L\'evy processes are non-local integro-differential operators, 
for which these problems are hard to solve, in particular in the case of the canonical Marcus noise. 
The advantage of our approach is among others the insensitivity to the boundary regularity of $D$ 
and the intuitive simplicity of the result.

\section{Object of study and main result}\label{sec: object of study}
\subsection{Deterministic dynamics}\label{subsec: deterministic dynamics}
Consider a bounded domain $D\subset \RR^d, d\gqq 1$ with piecewise $\cC^1$-smooth boundary 
and a vector field $f\in \cC^2(D, \RR^d)$, which points uniformly inward at the boundary. 
We are interested in the $d$-dimensional dynamical system given 
as the solution map $(t,x) \mapsto u(t; x),$ \mbox{$t\gqq 0,$} $x \in D$ 
of the autonomous ordinary differential equation 
\ba
\label{eq: det}
\dot u&=f(u),\quad\quad u(0) = x.
\ea
We further assume that the unique solution exists for all $x\in D$ and $t\geq 0$. 
Further we assume that the dynamical system defined by (\ref{eq: det}) has 
a global attractor $\aA$ in $D$. 

\begin{rem}\label{rem D.2}
Since by definition the global attractor attracts bounded sets in $D$, see for instance \cite{Temam97},
there exists a positively invariant set $\iI$ with $\aA \subset \iI \subset D$ 
such that $\dist(\pd D, \pd \iI)>0$ for which there is 
a time $\sS>0$ such that for all  $x\in D$ and $t\gqq \sS$
\begin{equation}
 u(t;x) \in \iI.
\end{equation}
\end{rem}

\paragraph{(D.1)} 
Let there exist a unique invariant probability measure $P$ on $\mathfrak{B}(\RR^d)$ with $\supp(P) = \aA$ 
such that all non-negative, measurable and bounded functions $\phi: \RR^d \ra \RR$ satisfy 
\begin{equation}
\lim_{t\ra \infty} \sup_{x\in D} \frac{1}{t} \int_0^t  \phi(u(s;x)) ds = \int_\aA \phi(v) P(dv).
\end{equation}

\begin{dfn} 
For $\delta>0$ we define the reduced domain of attraction 
\begin{align*}
D_{\delta} &:= D \setminus \bB_{\delta}(\partial D).
\end{align*}
\end{dfn}

\begin{rem} \label{lrm: properties of reduced domains} 
Due to the assumption on the uniform inward pointing of $f$ at $\partial D$, 
there is $\delta_0\in (0,1)$ such that for all $\delta\in (0, \delta_0]$
\[
u(t, D_{\delta}) \subset D_{\delta} \qquad \mbox{ for all }t\gqq 0.
\]
\end{rem}

\subsection{The probabilistic perturbation }

\noindent 

On a filtered probability space $(\Omega, \fF ,\mathbb{P}, (\fF_t)_{t\gqq 0})$, satisfying 
the usual hypothesis in the sense of \cite{Protter-04}, 
we consider a L\'evy process $Z = (Z_t)_{t\gqq 0}$ with values in $\RR^m$, $m\gqq 1$ 
and the following characteristic function
\begin{equation}
 \E e^{i\langle u, Z_1 \rangle}=\exp\Big( -\frac{\langle Au,u\rangle}{2}  
+i\langle b,u\rangle +\int( e^{i\langle u, z \rangle}-1-i\langle u,z\rangle\bI (\|z\|\geq 1)\nu(dz)\Big),\ u\in\bR^m.
\end{equation}

\noindent Let us denote by $N(dt, dz)$ the associated Poisson random measure with the intensity measure $dt \otimes \nu(dz)$ 
and the compensated Poisson random measure 
$\ti N(dt, dz) = N(dt, dz) - dt \nu(dz)$. Consequently, by the L\'evy--It\^o theorem \cite{Applebaum-09} 
the L\'evy process~$Z$ given above has the following almost surely pathwise additive decomposition  
\begin{equation}\label{eq: Levy-Ito}
Z_t=b t+ A^{\frac{1}{2}} B_t+\int_{(0, t]}\int_{0<\|z\|<1} z \ti N(ds, dz) + \int_{(0,t]}\int_{\|z\|\gqq 1} z N(ds, dz),\qquad t\gqq 0,
\end{equation}
with $B = (B_t)_{t\gqq 0}$ a standard Brownian motion in $\RR^m$. 
Furthermore, the random summands in (\ref{eq: Levy-Ito}) are independent. 
For further details on L\'evy processes we refer to \cite{Applebaum-09} and \cite{Sato-99}.\\

\noindent \textbf{(S.1)} The L\'evy measure $\nu$ of the process $Z$ is 
\textbf{regularly \mbox{varying at $\infty$}} with index $-\alpha$. 
Let $h\colon (0,\infty)\to (0,\infty)$ denote its tail, 
\begin{equation}\label{def: h}
h(r):=\int_{\|y\|\geq r }\nu(dy).
\end{equation}
Then there exist $\alpha>0$ and a non-trivial self-similar Radon measure $\mu$ on $\bar \bR^m\backslash\{0\}$
such that $\mu (\bar\bR^m\backslash \bR^m)=0$ and
  for any $a>0$ and any Borel set $A$ bounded away from the origin, $0\notin \overline{A}$ with $\mu(\partial A)=0$, 
the following limit holds true:
\begin{equation}\label{eq: regular variation}
\mu(aA)=\lim_{r\ra \infty} \frac{\nu(raA)}{h(r)} = 
\frac{1}{a^\alpha} \lim_{r\ra \infty} \frac{\nu(rA)}{h(r)}= \frac{1}{a^\alpha} \mu(A).
\end{equation}
In particular, following \cite{BinghamGT-87} there exists a positive function $\ell$ slowly varying at infinity 
such that 
\[
h(r) = \frac{1}{r^{\alpha} \ell(r)},\qquad\mbox{ for all} \quad r>0.
\]
The selfsimilarity property of the limit measure $\mu$ implies that $\mu$ assigns no mass to
spheres centred at the origin of $\bR^m$ and has no atoms.
For more information on multivariate heavy tails and regular variation we refer
the reader to Hult and Lindskog \cite{HultL-06-1} and Resnick \cite{Resnick-04}.\\

\noindent \textbf{(S.2)}  
\noindent Consider continuous maps $G\in \cC(\mathbb{R}^d\times \mathbb{R}^m, \mathbb{R}^d)$ 
and $F, H: \RR^d \ra \RR^d$ and fix the notation
\[
a(x, y) := F(x) F(y)^T \qquad \mbox{ for }x, y \in \RR^d.
\]
We assume that there exists $L>0$ such that
$f$, $G$, $H$ and $F$ satisfy the following properties. 

\begin{enumerate}
\item \textbf{Local Lipschitz conditions: } For all $x, y \in  D$ 
\begin{multline*}
\|f(x) -f(y)\|^2  + \|a(x,x) - 2 a(x, y) + a(y,y)\| + \|H(x) - H(y)\|^2\\
+ \|F(x) - F(y)\|^2 +  \int_{\bB_1} \|G(x,w)-G(y,w)\|^2 \nu(dw) \lqq L^2 \|x-y\|^2.
\end{multline*}

\item \textbf{Local boundedness: } For all $x\in D$ 
\begin{align*}
\|f(x)\|^2  + \|a(x,x)\| + \|H(x)\|^2 + \|F(x)\|^2 +  \int_{\bB_1} \|G(x,w)\|^2 \nu(dw) \lqq L^2 (1+ \|x\|^2).
\end{align*}

\item \textbf{Large jump coefficient:} 
For all $x, y \in D$ and $w\in \RR^m$
\begin{align*}
\|G(x,w) - G(y, w)\| \lqq L e^{L (\|w\| \wedge L)} \|x-y\|. 
\end{align*}

\item \textbf{Local bound for $G$ in small balls:} 
There exists $\delta'>0$ such that for $w\in \bB_{\delta'}(0)$ 
\begin{align*}
\sup_{v\in \bB_{\delta'}(\aA)} \|G(v, w)\| \lqq L.  
\end{align*}
\end{enumerate}

\label{S.4 G to 0 for small balls}

\begin{prp}
\noindent Let the assumptions \textbf{(D.1)} and \textbf{(S.1-2)} be fulfilled.  
Then for $\e\in (0,1)$ and $x\in D$ the stochastic differential equation 
\begin{align}
X_{t,x}^\e &= x + \int_0^t f(X^\e_{s,x})ds + \e \int_0^t H(X^\e_{s,x}) b \;ds + \e \int_0^t F(X^\e_{s, x}) d (A^{\frac{1}{2}}B_s) \nonumber\\
&\qquad  + \int_0^t \int_{\|z\| \lqq 1} G(X^\e_{s-,x}, \e z) \ti N(ds, dz) +  \int_0^t \int_{\|z\| > 1} G(X^\e_{s-,x}, \e z) N(ds, dz).  
\label{eq: sde}
\end{align}
has a unique local strong solution process $(X^\e_{t \wedge \TT,x})_{t\gqq 0}$ with c\`adl\`ag paths in $\RR^d$ 
and defines a strong Markov process with respect to $(\fF_t)_{t\gqq 0}$, 
where $\TT = \TT_x(\e)$ is the first exit time 
\begin{align*}
\mathbb{T}_x(\e)&:=\inf\{t\gqq 0\colon X^\e_{t,x}\notin D\}, \qquad \e>0, x\in D.
\end{align*}
\end{prp}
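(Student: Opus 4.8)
The plan is to establish existence and uniqueness of a local strong solution by the standard Picard--Lindel\"of iteration scheme adapted to SDEs with jumps, following the classical argument for It\^o SDEs with Lipschitz coefficients (see, e.g., Applebaum or Protter), localized to the domain $D$ via the stopping time $\TT_x(\e)$. First I would decompose the driving noise as in the L\'evy--It\^o representation \eqref{eq: Levy-Ito}: the finite-variation drift $\e\int_0^t H(X^\e_{s,x})b\,ds$, the continuous martingale part $\e\int_0^t F(X^\e_{s,x})\,d(A^{1/2}B_s)$, and the small-jump compensated integral $\int_0^t\int_{\|z\|\le 1}G(X^\e_{s-,x},\e z)\,\ti N(ds,dz)$, which is a purely discontinuous square-integrable martingale. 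The large-jump part $\int_0^t\int_{\|z\|>1}G(X^\e_{s-,x},\e z)\,N(ds,dz)$ is a compound-Poisson-type process since $\nu(\{\|z\|>1\})=h(1)<\infty$ by (S.1); it contributes only finitely many jumps on compact time intervals, so it can be handled by concatenating solutions over the interarrival intervals.

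The key steps are as follows. Step one: freeze the large jumps. Let $0<\tau_1<\tau_2<\cdots$ be the jump times of the Poisson process $N((0,t]\times\{\|z\|>1\})$ with associated marks $W_k$. On the stochastic interval $[0,\tau_1)$ the equation \eqref{eq: sde} has no large-jump term and the coefficients $f$, $\e H(\cdot)b$, $\e F(\cdot)A^{1/2}$, and $G(\cdot,\e z)$ (for $\|z\|\le 1$) are locally Lipschitz and of linear growth on $D$ by the Lipschitz and boundedness parts of (S.2). Step two: run a Picard iteration for this jump-diffusion on $[0,t\wedge\TT\wedge\tau_1]$: define $X^{(0)}\equiv x$ and iterate; using the It\^o isometry for the Brownian and compensated-Poisson integrals together with the combined Lipschitz bound
\[
\|f(x)-f(y)\|^2+\|a(x,x)-2a(x,y)+a(y,y)\|+\|H(x)-H(y)\|^2+\|F(x)-F(y)\|^2+\int_{\bB_1}\|G(x,w)-G(y,w)\|^2\nu(dw)\lqq L^2\|x-y\|^2,
\]
a Gronwall/contraction argument in $L^2$ on a small enough time interval gives a unique fixed point, which is then extended to all of $[0,\TT\wedge\tau_1)$ by the usual concatenation; c\`adl\`ag regularity is inherited from the stochastic integrals. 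Step three: at $\tau_1$, if the process is still in $D$, add the jump $X_{\tau_1}=X_{\tau_1-}+G(X_{\tau_1-},\e W_1)$ --- here item (S.2.3), the global-in-$x$ Lipschitz bound $\|G(x,w)-G(y,w)\|\lqq Le^{L(\|w\|\wedge L)}\|x-y\|$, and item (S.2.4) on local boundedness of $G$ in small balls are what make the post-jump value well-defined and keep the solution from blowing up; if $X_{\tau_1}\notin D$ then $\TT=\tau_1$ and we stop. Step four: restart the construction from $(\tau_1,X_{\tau_1})$ and repeat; since a.s.\ only finitely many $\tau_k$ lie in any $[0,T]$, this produces a unique c\`adl\`ag process $(X^\e_{t\wedge\TT,x})_{t\gqq 0}$. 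Finally, the strong Markov property with respect to $(\fF_t)_{t\gqq 0}$ follows from the fact that the solution is a measurable functional of the driving L\'evy process, whose increments are independent and stationary, combined with pathwise uniqueness --- this is the standard Yamada--Watanabe-type argument for SDEs driven by L\'evy noise; that $\TT_x(\e)$ is an $(\fF_t)$-stopping time is immediate since $D$ is open and the paths are c\`adl\`ag.

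The main obstacle is controlling the large-jump coefficient: unlike the compensated small-jump integral, the term $G(\cdot,\e z)$ for $\|z\|>1$ need not be square-integrable in $z$ against $\nu$, and the jump sizes $G(x,\e W_k)$ can be large, so one cannot simply absorb this term into the Picard contraction. This is precisely why the construction must be done piecewise between the large-jump times rather than globally, and why hypotheses (S.2.3) and (S.2.4) are imposed: (S.2.3) controls how the post-jump map depends on the pre-jump location (so that uniqueness propagates across jumps and the $x$-dependence stays Lipschitz with a jump-size-dependent constant that is nonetheless finite almost surely), while (S.2.4) prevents the small-noise ($\e$-scaled) jumps near $\aA$ from destroying local boundedness. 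One should also note that the solution is only claimed up to $\TT_x(\e)$, so global growth conditions are not needed --- the linear-growth bound in (S.2.2) merely guarantees no explosion before the process leaves $D$, and in fact on $D$ (bounded) the coefficients are automatically bounded, so the interval of existence in each Picard step is bounded below uniformly, which is what makes the concatenation terminate after finitely many steps on each $[0,T]$.
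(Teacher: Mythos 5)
Your proposal is correct and follows essentially the same route as the paper, which simply defers to Applebaum's Theorem~6.23 (p.~367): the interlacing construction you spell out (Picard iteration for the small-jump diffusion between successive large-jump times, concatenation across the a.s.\ finitely many large jumps, localization by the exit time $\TT_x(\e)$, and the strong Markov property via pathwise uniqueness together with measurability in the driving L\'evy noise) is precisely the standard argument behind that cited theorem.
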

\noindent The proof can be found for instance stated as Theorem 6.23 in \cite{Applebaum-09} on page 367. 

\noindent The multiplicative perturbations in the sense of It\^o, Fisk--Stratonovich or Marcus are of a special interest for applications.   
Assume that $Z$ is a pure jump process with $A=0$, $b=0$. For a globally Lipschitz continuous function 
$\Phi\colon \bR^d\to\bR^{d\times m}$ 
consider the It\^o and canonical SDEs
\begin{align}
\label{eq:ito}
X_t&=x+\int_0^t f(X_s)dt+\e\int_0^t \Phi(X_{s-})dZ_s,\\
\label{eq:marcus}
X_t^\diamond &=x+\int_0^t f(X_s^\diamond)dt+\e\int_0^t \Phi(X_{s-}^\diamond)\diamond dZ_s.
\end{align}
Then the It\^o SDE \eqref{eq:ito} is obtained from \eqref{eq: sde} with
\[
G(x,z):=x-\Phi(x)z
\]
and the Marcus SDE \eqref{eq:marcus} with 
\[
G(x,z):=\phi^z(x),
\]
where $\phi^z(x) = y(1;x)$ is the solution of the ordinary differential equation 
\[
\dot y(s) = \Phi(y(s)) z, \qquad y(0) =x , \quad s\in [0,1].   
\]
If $L$ is the Lipschitz constant of the matrix function $\Phi$ then the 
Gronwall lemma implies that
\[
\|G(x,z)-G(y,z)\| \lqq L e^{L\|z\|} \|x-y\|\qquad \forall x, y \in D, z\in \RR^m.
\]

\subsection{The main result}\label{subsec: the main results}

\noindent For $x\in \bR^d$, $U\in \fB(\RR^d)$ with $x\notin U$ 
we denote the set of increments $z\in \RR^m$ 
which send $x$ into $U$ by
\begin{align}\label{def: event E}
E^{U}(x)& :=\{z\in \RR^m\colon x + G(x,z)\in U\}. 
\end{align}

\noindent We define the following measure assigning for $U \in \fB(\RR^d)$ 
\begin{align*}
Q(U) &:=\int_{\aA} \mu(E^{U}(y))~P(dy).
\end{align*}

\begin{rem}
Clearly for 
\begin{align*}
\la_\e &:= \int_{\aA} \nu\Big(\frac{E^{D^c}(y)}{\e}\Big) P(dy) \qquad \mbox{ and }\qquad h_\e := h\Big(\frac{1}{\e}\Big), \quad\e\in (0,1)
\end{align*}
equation (\ref{eq: regular variation}) implies  
\begin{align*}
\lim_{\e\ra 0+} \frac{\la_\e}{h_\e} = Q(D^c).  
\end{align*}
\end{rem}

\begin{thm}\label{thm: first exit times}
Let Hypotheses \textbf{(D.1)} and \textbf{(S.1-2)} be fulfilled and suppose that $Q(\partial D) = 0$ and $Q(D^c) >0$.
Then for any $\gamma \in (0,\frac{1}{5})$ 
any $\theta>0$ and $U\in \mathfrak{B}(\RR^d)$ 
such that $Q(\partial U) =0$ 
the first exit time $\TT_y(\e)$ satisfies \\
\begin{align*}
\lim_{\e\to 0} \sup_{y\in D_{\e^\gamma}}
\Big|\EE\left[ e^{-\theta Q(D^c) h_\e  \TT_y(\e)} 
\ind\{X^\e_{\TT_y(\e) ,y}\in U\}\right]-
\frac{1}{1+ \theta} \frac{Q(U \cap D^c)}{Q(D^c)} \Big|=0.\\
\end{align*}
\end{thm}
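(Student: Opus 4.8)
## Proof Strategy

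The plan is to reduce the exit problem to a competition between a sequence of i.i.d.\ "large jump attempts", each occurring at the exponential rate $h_\e$ (after suitable time rescaling), using the time-scale separation between the deterministic relaxation onto $\aA$, the small-jump/Gaussian fluctuations, and the rare large jumps. First I would fix a small auxiliary radius $\rho>0$ and decompose the driving noise $Z$ into its "small" part (Gaussian component, drift, and jumps of size $\le \rho/\e$) and its "large" part (jumps of size $>\rho/\e$). On the event that no large jump has yet occurred, the process $X^\e$ is driven only by the small part; using the Lipschitz and boundedness assumptions \textbf{(S.2)(1)--(2)} together with a Gronwall/Doob estimate, one shows that on time intervals of length $o(1/h_\e)$ but long compared to the deterministic relaxation time $\sS$ from Remark~\ref{rem D.2}, the solution started anywhere in $D_{\e^\gamma}$ first enters a small neighbourhood of $\aA$ and then, by Hypothesis \textbf{(D.1)}, spends a fraction of time near each point $v\in\aA$ governed by the ergodic measure $P$. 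The key quantitative input here is that the small-noise solution stays $\e^{1-\gamma-\kappa}$-close to the deterministic trajectory $u(\cdot;x)$ with probability $1-o(1)$ on the relevant time scale, which follows from a maximal inequality applied to the stochastic integrals with coefficients $\e F$, $\e H b$, and $G(\cdot,\e z)$, $\|z\|\le\rho/\e$ (the last controlled via \textbf{(S.2)(4)} since $\e z\in\bB_{\delta'}(0)$ for small $\e$).

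Next I would analyse the large jumps. The large jumps of $Z$ arrive according to a Poisson process of rate $\e^{-\alpha}$ up to slowly varying corrections — precisely, the first large-jump time $\tau_1$ (jump size $>\rho/\e$) satisfies $h_\e\tau_1 \Rightarrow \mathrm{Exp}$-type behaviour, and more relevantly, the marked jump size, when normalized, converges in law to $\mu$ restricted to $\{\|z\|>\rho\}$ by the regular variation assumption \textbf{(S.1)}, equation \eqref{eq: regular variation}. At the moment of such a jump, the process sits near some random point $Y\in\aA$ whose law is $P$ (by the ergodic averaging above, made precise by the fact that the jump time is, conditionally, uniformly spread over a long window), and the jump sends $X^\e$ from $\approx Y$ to $\approx Y + G(Y,\e z)$, i.e.\ the displacement lands in $U$ precisely when $\e z\in E^{U}(Y)$, cf.\ \eqref{def: event E}. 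Hence, conditionally on a large jump occurring, the probability that it causes an exit into a target set $U\cap D^c$ is asymptotically $Q(U\cap D^c)/Q(D^c)$, using $Q(\partial D)=Q(\partial U)=0$ so that the sets $E^{\cdot}(\cdot)$ have $\mu$-null boundary $P$-a.s.\ and the weak convergence passes through. If the jump does \emph{not} cause an exit (probability $\to 1 - Q(D^c)/h_\e \cdot(\text{const})$, i.e.\ the jump lands back in $D$, or more carefully in $D_{\e^\gamma}$ after a further relaxation), the process re-relaxes onto $\aA$ and the attempt is repeated; the strong Markov property at $\tau_1$ makes these attempts genuinely i.i.d.\ in the limit. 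Controlling the "bad" events — a large jump landing in the thin shell $\bB_{\e^\gamma}(\partial D)$, or the small-noise part carrying the process out of $D$ on its own before any large jump, or the process failing to re-enter $D_{\e^\gamma}$ after a non-exiting jump — is where most of the technical work lies; each is shown to have probability $o(1)$ per unit of rescaled time, hence negligible.

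Assembling these pieces: write $\TT_y(\e) = \tau_1 + \TT'$ on the event that the first large jump does not exit, where $\TT'$ is (in law, asymptotically) an independent copy of the exit time started from $D_{\e^\gamma}$, and $\TT_y(\e)=\tau_1$ on the exiting event. Taking the Laplace transform with parameter $\theta Q(D^c) h_\e$, using that $h_\e \tau_1$ converges to an exponential random variable of parameter $Q(D^c)$ (this is the content of the Remark computing $\lim \la_\e/h_\e = Q(D^c)$), and that the exiting probability with target $U$ is asymptotically $Q(U\cap D^c)/h_\e$ while the total exiting probability per attempt is $Q(D^c)/h_\e$, one obtains a fixed-point relation
\[
\Psi(\theta) = \mathbb{E}\big[e^{-\theta Q(D^c) h_\e \tau_1}\big]\Big(\tfrac{Q(U\cap D^c)}{Q(D^c)} + \big(1-\tfrac{Q(D^c)}{h_\e}\big)^{\!?}\,\Psi(\theta) + o(1)\Big),
\]
which in the limit becomes $\Psi(\theta) = \tfrac{1}{1+\theta}\big(\tfrac{Q(U\cap D^c)}{Q(D^c)} + \tfrac{\theta}{1+\theta}^{-1}\cdots\big)$; solving the geometric series yields exactly $\Psi(\theta) = \tfrac{1}{1+\theta}\tfrac{Q(U\cap D^c)}{Q(D^c)}$, uniformly over $y\in D_{\e^\gamma}$ because all estimates above are uniform in the starting point (the relaxation time $\sS$ and the ergodic convergence in \textbf{(D.1)} are uniform over $D$). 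The main obstacle, and the step I would budget the most care for, is the uniform ergodic control: unlike the point-attractor case where the deterministic trajectory simply converges to $\mathfrak{s}$, here one must show that a large jump occurring at a "typical" time sees the process distributed according to $P$ on $\aA$ — this requires combining the uniform time-average convergence of \textbf{(D.1)} with the fact that the large-jump clock is independent of and slow compared to the ergodic mixing, plus a stability estimate ensuring the small-noise solution shadows the deterministic flow closely enough that its occupation measure also converges to $P$. The constraint $\gamma\in(0,\tfrac15)$ presumably arises from balancing the shell thickness $\e^\gamma$ against the shadowing error and the probability of a large jump during the relaxation window.
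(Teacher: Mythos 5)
Your overall strategy — time-scale separation of large jumps from small noise, Gronwall shadowing of the deterministic flow, ergodic averaging along the attractor via \textbf{(D.1)}, and a strong-Markov renewal/geometric-series argument over successive large-jump times — is exactly the structure of the paper's proof (Lemmas 3.1--3.5, Corollaries 3.1--3.2, Proposition 3.1, then Propositions 4.1--4.2). The fixed-point relation you write down with the question mark is the heuristic form of what the paper proves rigorously as matching upper and lower bounds, each summed as a geometric series and made uniform in $y$ by taking $\sup$/$\inf$ over $D_{\delta_\e}$.

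There is, however, a concrete technical gap in your decomposition. You cut the noise at a fixed effective scale $\rho$, declaring ``large'' the jumps $\|z\|>\rho/\e$. With this choice the small-noise driver $\xi^\e$ still contains jumps of $Z$ of size up to $\rho/\e$, i.e.\ the process $V^\e$ has jumps of effective size up to $\sup_{\|w\|\le\rho}\|G(v,w)\|$, an $O(1)$ quantity. The rate of such jumps with $\|\e z\|\in(\delta',\rho)$ is of order $h_\e(\delta'^{-\alpha}-\rho^{-\alpha})$, which is \emph{comparable} to your large-jump rate $\beta_\e\sim h_\e\rho^{-\alpha}$. So on the interval $[0,T_1]$ a positive expected number of $O(1)$-sized jumps occur, and the claimed $\e^{1-\gamma-\kappa}$-closeness of $V^\e$ to $u(\cdot;x)$ with probability $1-o(1)$ cannot hold. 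This is precisely why the paper takes an $\e$-dependent cutoff $\rho^\e=\e^{-\rho}$, $\rho\in(0,\tfrac12)$: then every jump of the small-noise part satisfies $\|\e z\|\le\e^{1-\rho}\to 0$, the local-martingale part of $V^\e-u$ has jumps uniformly bounded by a constant tending to zero, and the exponential maximal inequality (Lemma 3.3 / Corollary 3.1) delivers $\e^\gamma$-shadowing on $[0,T]$, which feeds into the ergodicity statement (Proposition 3.1) that is the backbone of the whole proof. With the fixed-$\rho$ cut you would instead have to treat the ``moderate'' jumps as additional renewal events, re-prove the occupation-measure convergence with $O(\rho)$ tolerance, and send $\rho\to0$ at the very end — doable in principle, but it is not what your proposal actually argues, and as written the shadowing claim fails. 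The constraint $\gamma\in(0,\tfrac15)$ then comes from balancing $\delta_\e=\e^\gamma$ against $\rho^\e=\e^{-\rho}$ in the condition $\e\rho^\e/\delta_\e^{5/2}\to0$ for $\rho\in(0,\tfrac12)$, which is an $\e$-dependent-cutoff phenomenon and has no counterpart in your fixed-$\rho$ set-up.
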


\begin{cor}\label{cor: first exit times}
Under the assumptions of Theorem \ref{thm: first exit times} follows 
\begin{align*}
 Q(D^c) h_\e     \TT_x(\e) &\stackrel{d}{\to} \mbox{EXP}(1), \\[2mm]
\PP(X_{\TT_x(\e), x}^\e \in U) &\to \frac{Q(U\cap D)}{Q(D)},\quad \e\to 0,
\end{align*}
where the convergence is uniform over all initial values $x \in D_{\e^\gamma}$.   
\end{cor}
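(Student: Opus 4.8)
The plan is to deduce both statements from Theorem~\ref{thm: first exit times}, the first by specialising the test set $U$ and appealing to uniqueness of Laplace transforms, the second by letting the parameter $\theta$ tend to $0$ in the full joint statement; throughout, all estimates must be kept uniform over the shrinking families $D_{\e^\gamma}$. We use repeatedly that $W\sim\mathrm{EXP}(1)$ is characterised by $\EE[e^{-\theta W}]=(1+\theta)^{-1}$ for $\theta>0$, together with the fact that, since $D$ is open and the paths of $X^\e$ are right-continuous, $X^\e_{\TT_x(\e),x}\in D^c$ almost surely.

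\textbf{Exit time.} Apply Theorem~\ref{thm: first exit times} with $U=D^c$: then $Q(\partial U)=Q(\partial D)=0$ by hypothesis, $\ind\{X^\e_{\TT_x(\e),x}\in D^c\}=1$ almost surely, and $Q(D^c\cap D^c)/Q(D^c)=1$, so that for every fixed $\theta>0$
\[
\lim_{\e\to0}\ \sup_{x\in D_{\e^\gamma}}\Big|\EE\big[e^{-\theta Q(D^c)h_\e\TT_x(\e)}\big]-\frac{1}{1+\theta}\Big|=0 .
\]
Now let $\e_n\downarrow0$ and $x_n\in D_{\e_n^\gamma}$ be arbitrary and set $W_n:=Q(D^c)h_{\e_n}\TT_{x_n}(\e_n)\geq 0$. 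The display yields $\EE[e^{-\theta W_n}]\to(1+\theta)^{-1}$ for every $\theta>0$; since $(1+\theta)^{-1}\to1$ as $\theta\downarrow0$, no mass escapes to $+\infty$, and the continuity theorem for Laplace transforms gives $W_n\stackrel{d}{\to}W$. As the sequences were arbitrary, this is precisely the asserted convergence $Q(D^c)h_\e\TT_x(\e)\stackrel{d}{\to}\mathrm{EXP}(1)$, uniformly over $x\in D_{\e^\gamma}$.

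\textbf{Exit location.} Fix $U\in\mathfrak{B}(\RR^d)$ with $Q(\partial U)=0$ and $\theta>0$. From $0\le e^{-\theta Q(D^c)h_\e\TT_x(\e)}\ind\{X^\e_{\TT_x(\e),x}\in U\}\le\ind\{X^\e_{\TT_x(\e),x}\in U\}$ we get
\[
\Big|\PP\big(X^\e_{\TT_x(\e),x}\in U\big)-\EE\big[e^{-\theta Q(D^c)h_\e\TT_x(\e)}\ind\{X^\e_{\TT_x(\e),x}\in U\}\big]\Big|\le 1-\EE\big[e^{-\theta Q(D^c)h_\e\TT_x(\e)}\big].
\]
Taking $\sup_{x\in D_{\e^\gamma}}$ and then $\limsup_{\e\to0}$, and combining this with the exit-time display (for the right-hand side), with Theorem~\ref{thm: first exit times} (for the distance of $\EE[e^{-\theta Q(D^c)h_\e\TT_x(\e)}\ind\{X^\e_{\TT_x(\e),x}\in U\}]$ to $(1+\theta)^{-1}Q(U\cap D^c)/Q(D^c)$), and with $|1-(1+\theta)^{-1}|=\theta(1+\theta)^{-1}$ and $Q(U\cap D^c)\le Q(D^c)$, the triangle inequality gives
\[
\limsup_{\e\to0}\ \sup_{x\in D_{\e^\gamma}}\Big|\PP\big(X^\e_{\TT_x(\e),x}\in U\big)-\frac{Q(U\cap D^c)}{Q(D^c)}\Big|\ \le\ \frac{\theta}{1+\theta}+0+\frac{\theta}{1+\theta}\cdot\frac{Q(U\cap D^c)}{Q(D^c)}\ \le\ \frac{2\theta}{1+\theta}.
\]
Letting $\theta\downarrow0$ forces the left-hand side to vanish, which together with $X^\e_{\TT_x(\e),x}\in D^c$ almost surely (so that only the part $U\cap D^c$ matters) is the stated convergence $\PP(X^\e_{\TT_x(\e),x}\in U)\to Q(U\cap D^c)/Q(D^c)$, uniformly over $x\in D_{\e^\gamma}$.

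\textbf{Main obstacle.} There is no new probabilistic content beyond Theorem~\ref{thm: first exit times}: the only point requiring care is the interchange of the limits $\e\to0$ and $\theta\to0$ while retaining uniformity over the sets $D_{\e^\gamma}$. This is why in the exit-location step one should avoid a direct dominated-convergence argument in favour of the explicit $O(\theta)$ bound displayed above, and in the exit-time step one should invoke the Laplace continuity theorem in its sequential form, so that the conclusion is genuinely uniform in the starting point.
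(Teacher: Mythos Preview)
Your argument is correct and is precisely the standard deduction the paper has in mind: the corollary is stated without proof as an immediate consequence of Theorem~\ref{thm: first exit times}, and your two steps---specialising to $U=D^c$ and invoking the continuity theorem for Laplace transforms, then sending $\theta\downarrow 0$ via the explicit $O(\theta)$ bound---are exactly the natural way to fill in the details. Note also that you have correctly read the intended limit as $Q(U\cap D^c)/Q(D^c)$; the $Q(U\cap D)/Q(D)$ appearing in the printed statement is a typo, since $X^\e_{\TT_x(\e),x}\in D^c$ almost surely.
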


\subsection{Example: Van der Pol oscillator perturbed by $\alpha$-stable L\'evy noise }

As a simple but illustrative application of Theorem \ref{thm: first exit times} we determine the law  of the first exit
time of a Van der Pol oscillator perturbed by small It\^o-multiplicative $\alpha$-stable L\'evy noise.
More precisely, let $Z$ be a bivariate L\'evy process with the characteristic function
\[
\EE\left[ e^{i\langle u , Z_t \rangle}\right]=e^{-t c(\alpha) \|u\|^\alpha},\quad \alpha\in (0,2),\ u\in\bR^2,\quad 
c(\alpha)=\frac{\pi}{2^\alpha}\frac{|\Gamma(-\frac{\alpha}{2})|}{\Gamma(1+\frac{\alpha}{2})},
\]
and a L\'evy triplet $(0,\nu,0)$, where
\begin{align*}
\nu(dy)=\bI_{\bR^2\backslash \{0\}}(y)\frac{dy}{\|y\|^{2+\alpha}}.
\end{align*}
Clearly, $\nu$ is a regularly varying measure of index $-\alpha$ with the limit measure $\mu=\nu$ and a scaling function
\[
h(r)=\int_{\|y\|\geq r}\frac{dy}{\|y\|^{2+\alpha}}=\frac{2\pi}{\alpha}\frac{1}{r^\alpha}.
\]
Consider a  Van der Pol oscillator for $u = (u_1, u_2)$ and $f = (f_1, f_2)$ 
\begin{align*}
\dot u&=f(u),
\qquad 
\begin{cases} 
f_1(u_1, u_2)&=u_1,\\
f_2(u_1, u_2)&=-u_1 + (1-u_1^2)u_2.
\end{cases}
\end{align*}
which has an unstable stationary solution $u\equiv 0$ and a unique periodic solution
$u^\circ=(u_1^\circ(t),u_2^\circ(t))_{t\in [0, T^\circ ]}$ of period $T^\circ>0$ irrespective of initial values 
which we can omit since all quantities involved will not depend on them. 
It is well known that the set $\cA =\{ (u_1^\circ(t),u_2^\circ(t))_{t\in [0, T^\circ ]}\}\subset \bR^2$
is an exponentially orbitally stable limit cycle. 
In particular for any bounded and measurable function $\phi: \RR^d \ra (0,\infty)>0$, 
any initial point $x\neq 0$, we have 
\begin{align*}
&\frac{1}{t}\int_0^t \phi(u(s,x))\, ds\to \frac{1}{T^\circ} \int_0^{T^\circ} \phi(u^\circ(s)) ds =\int_\cA\phi(v)P(dv),\\
&\mbox{ where }\quad P(B)=\frac{1}{T^\circ} \int_0^{T^\circ} \bI_B(u^\circ(s)) ds,\quad \mbox{ for }B\in \fB(\RR^2),  
\end{align*}
and this convergence is uniform over all $x \in D$ bounded away from the origin.
Consider now a Van der Pol oscillator perturbed by multiplicative It\^o noise
\begin{align*}
dX^\e_t=f(X^\e_t)dt+\e G(X^\e_t) dZ_t
\end{align*}
where $(x_1, x_2) \mapsto G(x_1,x_2)$ is a $2\times 2$ matrix valued function satisfying Hypotheses (S.1) and (S.2) of Section 2. 
Let $D$ be an open bounded invariant domain of attraction containing the 
limit cycle $\cA$ with $\dist(\cA,\partial D)>0$, see Fig.~\ref{f:vdp}.
 \begin{figure}[t]
\begin{center} \hfill a)
 \includegraphics[width=7cm]{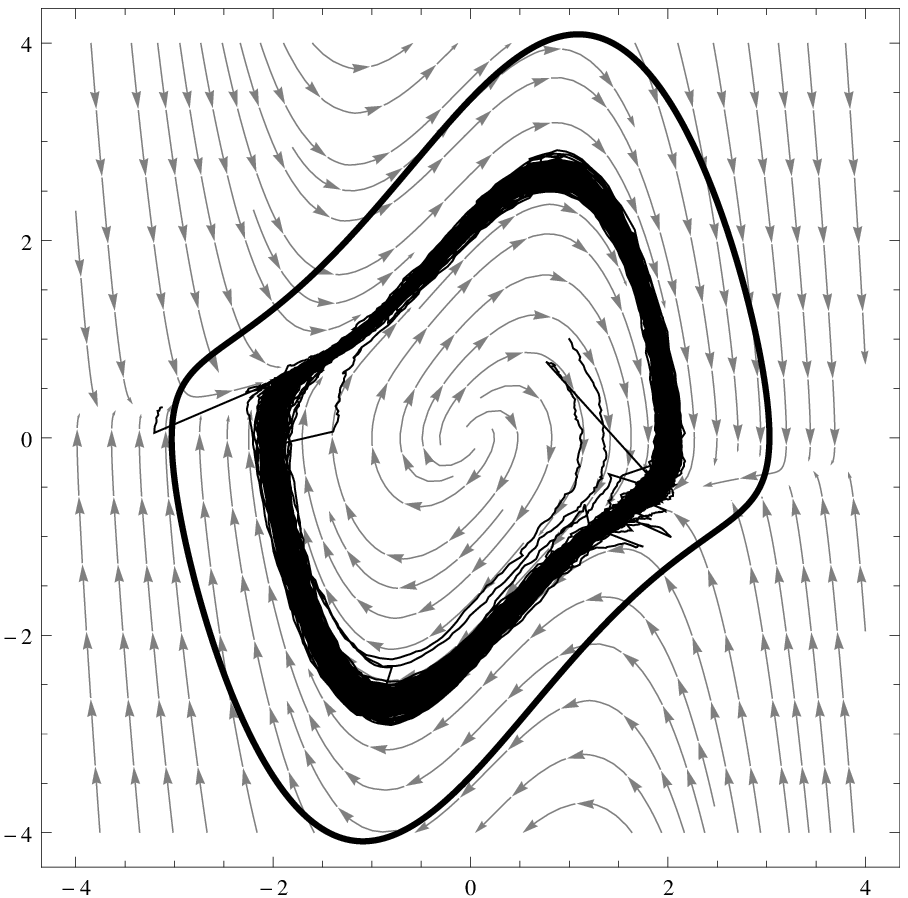}\hfill 
b)\includegraphics[width=7cm]{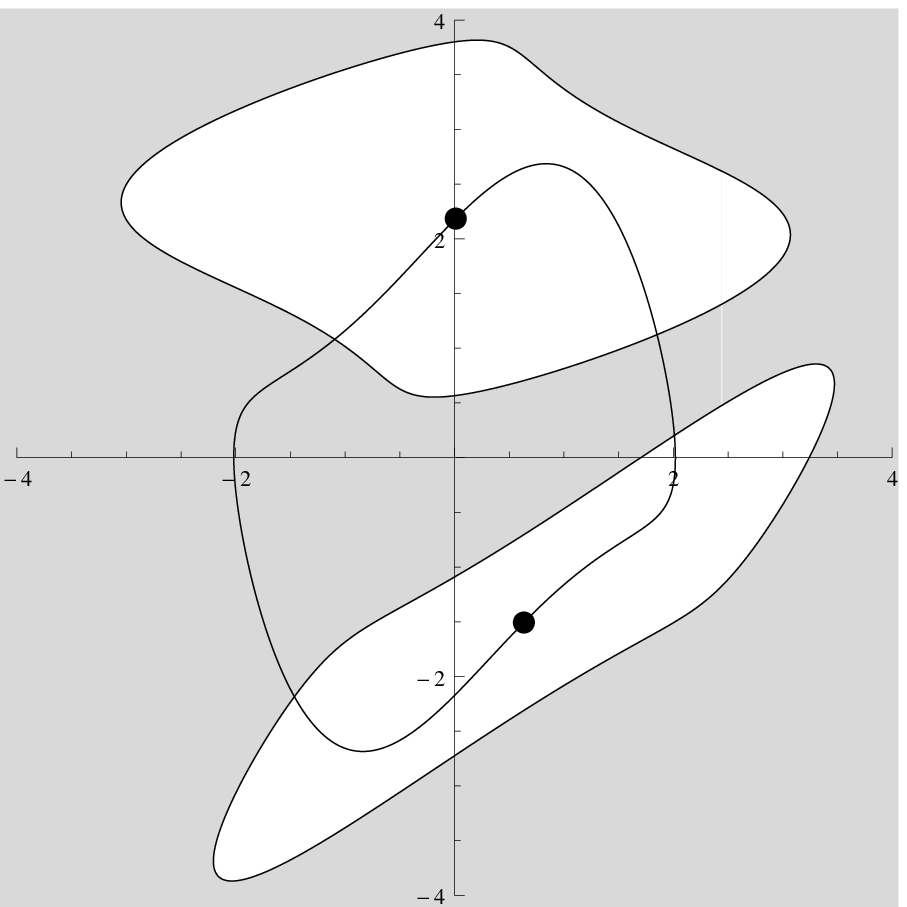}\hfill 
\end{center}
\caption{a) A typical exit path of a Van der Pol oscillator perturbed by 1.9-stable L\'evy noise; b) the domains 
$G^\ominus (D^c-u^\circ(t))$ in the space of noise jumps for two different values of $t\in[0, T^\circ]$.\label{f:vdp}}
\end{figure}
Let
\begin{align*}
G_t&= G(u^\circ_1(t),u^\circ_2(t))\quad  \text{ and }\quad  G^\ominus_t:=
\begin{cases}
G^{-1}_t, &\det G_t\neq 0,\\
0,&\text{ otherwise}.
\end{cases}
\end{align*}
For any $\delta>0$ we can choose a small neighbourhood $\bB_\delta(0)$ 
of the unstable fixed point~$0$ of the Van der Pol oscillator, such that
the domain $D^{(\delta)}=D \backslash \bB_\delta(0)$ and $f$ 
satisfy Hypothesis (D.1). 
Let $x\in D^{(\delta)}$. Denote by $\mathbb T^{(\delta)}_x(\e)$ the first exit time from the domain $D^{(\delta)}$. 
We are now in the state to apply Theorem~\ref{thm: first exit times} and find that
\begin{align*}
\e^\alpha \frac{2\pi}{\alpha T^\circ}
\int_0^{T^\circ}\Big[\int_{G_s^\ominus D^c} +\int_{G_s^\ominus \bB_{(\delta)}(0)}\frac{dy}{\|y-u^\circ(s)\|^{2+\alpha}}\, \Big]d s \cdot 
\mathbb T_x^{(\delta)}(\e) \to {EXP}(1),\quad \e\to 0.
\end{align*}
Taking into account that $\int_{\bB_{(\delta)}}  dy \to 0$ as $\delta\to 0$ we finally obtain 
the limiting law for $\mathbb T_y(\e)$ such that 
\begin{align*}
\e^\alpha 
\bigg(\frac{2\pi}{\alpha T^\circ}
\int_0^{T^\circ}\int_{G_s^\ominus D^c} \frac{dy}{\|y-u^\circ(s)\|^{2+\alpha}}\, d s\bigg) \cdot \mathbb T_x^{(0)}(\e)
\stackrel{d}{\to} EXP(1),\quad \e\to 0, 
\end{align*}
with the convergence uniformly over all $x\in D \setminus \bB_{\e^{\frac{1}{10}}}(\partial D^\times)$ with $D^\times = D\setminus \{0\}$.

\section{Small jumps dynamics\label{sec: small noise asymptotics}}

The aim of this section is to determine the precise asymptotics of 
$(X^\e_{t,x})_{t\in [0, T_1]}$ for the first large jump times $T_1$. 
This will be accomplished in Proposition \ref{prop: ergodicity}, 
which tell us that for times $t\in [0, T_1)$ the deterministic dynamics 
and its ergodicity property dominates, and at $t = T_1$ 
there occurs a single large jump. 
We assume Hypotheses \textbf{(D.1)} and \textbf{(S.1-2)} to be satisfied in the sequel.

\subsection{Asymptotics until the first large jump}

Let $\rho=\rho_\e$, $\e\in(0,1]$, be a positive sequence, which is monotonically increasing to infinity, 
$\rho_\e\nearrow \infty$ as $\e\searrow 0$ and denote by 
\begin{align*}
 \beta_\e := \nu(\bB_{\rho^\e}^c).
\end{align*}
Consider the following $\e$-dependent L\'evy-It\^o decomposition $Z_t :=  \xi^\e_t + \eta_t^\e$ for all $t \gqq 0$, $\e\in (0,1)$, 
\begin{equation}
\label{def: b eps}
\begin{aligned} 
&\eta^\e_t := \int_{(0,t]}\int_{\|z\| > \rho^\e } z N(ds, dz),\\
&\xi^\e_t := Z_t - \eta_t^\e = b_\e t  + A^{\frac{1}{2}} B_t+\int_{(0, t]}\int_{0<\|z\|\lqq \rho^\e} z \ti N(ds, dz),\\
&b_\e  := b + \EE[\int_{(0,1]}\int_{\{1< \|y\|\lqq \rho^\e\}} y N(ds, dz)] = b + \int_{1 < \|z\|\lqq \rho^\e} z \nu(dz).
\end{aligned}
\end{equation}
The compound Poisson process $\eta^\e$ here is characterised by a family of  i.i.d.\ waiting times $(\tau^\e_i)_{i\in \NN}$ 
with \mbox{$\tau_i^\e \sim \mbox{EXP}(\beta_\e)$,} the renewal times 
\[
T_i^\e = \sum_{k=1}^i \tau_k^\e, 
\]
and an family of  i.i.d.\ large jumps $(W^\e_i)_{i\in\NN}$, also independent of $(\tau^\e_i)_{i\in \NN}$ 
with $W^\e_i \sim \nu_\e$, where
\begin{equation}\label{eq: truncated nu}
\nu_\e(\cdot) = \frac{\nu(\cdot \cap \bB_{\rho^\e}^c)}{\nu(\bB_{\rho^\e}^c)}.
\end{equation}
The process $\xi^\e$ is a L\'evy process with jumps bounded from above by $\rho^\e$ and hence has all finite moments.

\subsection{Control of the small jump noise} 

In this subsection we show that the probabilities of deviations of 
bounded integrals driven by the small noise $\e \xi^\e$ defined in (S.2) 
decay exponentially. 

\begin{lem}
\label{lem: drift estimate} 
Let $(\delta_\e)_{\e\in(0,1]}$ be a monotone sequence with $\delta_\e\searrow 0$ as $\e\searrow 0$ satisfying in addition
\begin{equation}\label{eq: rho-delta limit 0}
\lim_{\e\ra 0+} \e \frac{\rho^\e}{\delta_\e} = 0. 
\end{equation}
Then for any $C>0$ there is $\e_0\in (0,1)$ such that for all $\e \in (0, \e_0]$ 
\begin{equation}
\label{eq: drift estimate}
\frac{\e \|b_\e\|}{\delta_\e} \lqq C. 
\end{equation}
\end{lem}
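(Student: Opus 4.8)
The plan is to estimate $\|b_\e\|$ explicitly from its definition in \eqref{def: b eps}, where $b_\e = b + \int_{1 < \|z\| \lqq \rho^\e} z\,\nu(dz)$. The term $b$ is a fixed constant, so $\e\|b\|/\delta_\e \to 0$ trivially since $\delta_\e \searrow 0$ is only an obstacle if it goes to zero faster than $\e$; but actually $\e/\delta_\e \to 0$ is \emph{not} assumed — only $\e\rho^\e/\delta_\e \to 0$ is. However, since $\rho_\e \nearrow \infty$, for small $\e$ we have $\rho^\e \gqq 1$, hence $\e/\delta_\e \lqq \e\rho^\e/\delta_\e \to 0$, which takes care of the constant term $b$. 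So the crux is the integral term.

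First I would bound $\big\| \int_{1 < \|z\| \lqq \rho^\e} z\,\nu(dz) \big\| \lqq \int_{1 < \|z\| \lqq \rho^\e} \|z\|\,\nu(dz)$. The key step is to show this is $O(\rho^\e \cdot h(1)) = O(\rho^\e)$, or more precisely $o(\rho^\e/\e \cdot \delta_\e \cdot \text{something})$ — but since we want $\e\|b_\e\|/\delta_\e \lqq C$, it suffices to show $\int_{1<\|z\|\lqq \rho^\e}\|z\|\,\nu(dz) = O(\rho^\e)$ with a constant independent of $\e$, because then $\e\|b_\e\|/\delta_\e \lqq \e(\|b\| + \Const\cdot\rho^\e)/\delta_\e \to 0 \lqq C$. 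To get the $O(\rho^\e)$ bound I would use integration by parts (layer-cake / Fubini) on the tail function: $\int_{1 < \|z\| \lqq R} \|z\|\,\nu(dz) = \int_1^R h(r)\,dr - R\,h(R) + h(1)$ roughly, or more cleanly $\int_{1<\|z\|\lqq R}\|z\|\nu(dz) \lqq h(1) + \int_1^R h(r)\,dr + R h(R)$. By \textbf{(S.1)}, $h(r) = 1/(r^\alpha \ell(r))$ with $\ell$ slowly varying.

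The main obstacle — and the place where the argument splits by cases — is the value of $\alpha$. If $\alpha > 1$, then $\int_1^\infty h(r)\,dr < \infty$ (by Karamata's theorem on integrals of regularly varying functions, since $h$ is regularly varying of index $-\alpha < -1$), so the whole integral $\int_{\|z\|>1}\|z\|\,\nu(dz)$ is a finite constant and $b_\e$ is bounded uniformly in $\e$; then $\e\|b_\e\|/\delta_\e \lqq \Const\cdot\e/\delta_\e \lqq \Const \cdot \e\rho^\e/\delta_\e \to 0$. If $\alpha = 1$, Karamata gives $\int_1^R h(r)\,dr$ is slowly varying, hence $o(\rho^\e)$, and $R h(R) = 1/\ell(R) = o(1)$, so again $\|b_\e\| = o(\rho^\e)$ and we conclude as before. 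If $\alpha < 1$, Karamata's theorem gives $\int_1^R h(r)\,dr \sim \frac{R h(R)}{1-\alpha}$, which is regularly varying of index $1 - \alpha \in (0,1)$, hence $= o(R) = o(\rho^\e)$; thus $\|b_\e\| \lqq \|b\| + o(\rho^\e)$ and $\e\|b_\e\|/\delta_\e \lqq o(\e\rho^\e/\delta_\e) + \e\|b\|/\delta_\e \to 0$. In all three cases the limit is $0$, which is certainly $\lqq C$ for any $C > 0$ once $\e$ is small enough, so an appropriate $\e_0 = \e_0(C)$ exists. I would present this as: reduce to bounding the truncated first moment, apply Karamata, split into the three regimes, and in each observe $\e\|b_\e\|/\delta_\e \to 0$ using hypothesis \eqref{eq: rho-delta limit 0}; the mild subtlety worth flagging is that one only has control of $\e\rho^\e/\delta_\e$, not of $\e/\delta_\e$ directly, so every bound must be arranged to carry a factor $\rho^\e$ (harmless since $\rho^\e \gqq 1$ eventually).
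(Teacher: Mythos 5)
Your proof is correct, but it travels a much longer road than the paper does, and the extra machinery you invoke is not needed for this lemma. The paper's argument, after the same reduction $\|b_\e\|\lqq\|b\|+\|\int_{1<\|z\|\lqq\rho^\e}z\,\nu(dz)\|$, simply observes that on the truncation domain $\|z\|\lqq\rho^\e$ pointwise and the total mass there is at most $h(1)<\infty$; this gives the one-line bound
\[
\Big\|\int_{1<\|z\|\lqq\rho^\e}z\,\nu(dz)\Big\|\lqq\int_{1<\|z\|\lqq\rho^\e}\|z\|\,\nu(dz)\lqq\rho^\e\,h(1),
\]
hence $\|b_\e\|\lqq\|b\|+h(1)\rho^\e$ and the claim follows from \eqref{eq: rho-delta limit 0}. (The paper phrases it slightly differently via Cauchy--Schwarz applied to $\int\|z\|^2\,\nu(dz)$, landing on $\|b_\e\|\lqq\|b\|+\sqrt{h(1)}\,\rho^\e$, but the idea is the same crude cutoff bound.) No integration by parts, no Karamata, and no casework in $\alpha$ is required.

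Your route via the layer-cake representation and Karamata's theorem is sound and yields strictly more information --- it shows the truncated first moment is $o(\rho^\e)$ when $\alpha\gqq1$ and in fact $O(1)$ when $\alpha>1$ --- but the lemma only needs the $O(\rho^\e)$ bound, which comes for free without exploiting regular variation at all. The splitting into regimes $\alpha<1$, $\alpha=1$, $\alpha>1$ is therefore dispensable overhead here; it would become relevant only if one wanted the optimal growth order of $b_\e$. On the other hand, your explicit flagging that only $\e\rho^\e/\delta_\e\to0$ is hypothesized (and not $\e/\delta_\e\to0$), and that the constant term $\|b\|$ is absorbed via $\rho^\e\gqq1$ for small $\e$, is a genuine subtlety that the paper leaves implicit, and it is good that you made it explicit.
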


\begin{proof}
In order to prove (\ref{eq: drift estimate}) we center the process $\xi$, $\ti \xi_t := \xi_t - b_\e t$, such that $\ti \xi_t$ is a L\'evy martingale with jumps bounded 
from above by $\rho^\e$. Since $\|b_\e\| \lqq \|b\| + \|\int_{1< \|z\|\lqq \rho^\e} y \nu(dy)\|$ we obtain by Jensen's inequality 
and the regular variation of the function $h$ defined by (\ref{def: h}) that 
\begin{align*}
\Big\|\int_{1< \|z\|\lqq \rho^\e} y \nu(dy)\Big\|^2 &\lqq \int_{1 < \|z\|\lqq \rho^\e} \|y\|^2 \nu(dy) 
= -\int_1^{\rho^\e} r^2 h(dr) \lqq (\rho^\e)^2 h(1), 
\end{align*}
such that $\|b_\e\| \lqq \|b\| + \sqrt{h(1)} \rho^\e$, which gives the desired result with the help of (\ref{eq: rho-delta limit 0}).  
\end{proof}

\begin{lem}\label{lem: quadratic variation} 
Let $(\delta_\e)_{\e\in(0,1]}$ be a monotone sequence with $\delta_\e\searrow 0$ as $\e\searrow 0$
and $p\gqq 1$ satisfying  
\begin{equation}\label{eq: rho-delta limit}
\lim_{\e\ra 0+} \frac{\e\rho^\e}{\delta_{\e}^{(p+1)/2}} = 0. 
\end{equation}
Then for all $T>0$ and $C>0$ there is $\e_0\in (0,1)$ such that for all $\e \in (0, \e_0]$ 
\begin{align*}
\PP([\e  \xi]_{\tau^\e} > C \delta_\e^p) &\lqq e^{-C \delta_\e^{-1} +1}.
\end{align*}
\end{lem}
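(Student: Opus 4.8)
The plan is to control the quadratic variation $[\e\xi]_{\tau^\e}$ by splitting the contribution of $\tau^\e$ (the first waiting time of the compound Poisson process $\eta^\e$, which is $\mathrm{EXP}(\beta_\e)$-distributed) from the conditional bound on the quadratic variation over a deterministic time horizon. First I would recall that $\xi^\e = \ti\xi^\e + b_\e t$ with $\ti\xi^\e$ a square-integrable L\'evy martingale whose jumps are bounded by $\rho^\e$, so that the (pure-jump part of the) quadratic variation is $[\e\xi]_t = \e^2[\ti\xi]_t = \e^2 \int_{(0,t]}\int_{0<\|z\|\le\rho^\e}\|z\|^2 N(ds,dz)$ (plus the Brownian term $\e^2\|A^{1/2}\|^2 t$ if $A\ne 0$, which is handled the same way since it is Lipschitz in $t$). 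The key structural point is that $[\e\xi]_t$ is an increasing additive functional with stationary independent increments, and $\EE[\e\xi]_1 = \e^2\big(\trace A + \int_{0<\|z\|\le\rho^\e}\|z\|^2\nu(dz)\big) =: \e^2 \sigma_\e^2$; by the same Jensen/regular-variation estimate as in Lemma \ref{lem: drift estimate} one gets $\sigma_\e^2 \lqq \Const\cdot(\rho^\e)^2$ (using $-\int_0^{\rho^\e} r^2 h(dr) \lqq (\rho^\e)^2 h(0+)$ truncated appropriately, or $\int_{\|z\|\le 1}\|z\|^2\nu(dz)<\infty$ for the small part).

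Next I would condition on $\tau^\e$ and split the event according to whether $\tau^\e$ is large. Fix a deterministic threshold $t_\e$ (e.g. $t_\e = \delta_\e^{-1}/\beta_\e$ up to constants, or simply a fixed $T$ if $\beta_\e$ stays bounded below; more robustly one takes $t_\e$ so that $\beta_\e t_\e \asymp \delta_\e^{-1}$). On $\{\tau^\e > t_\e\}$ we use $\PP(\tau^\e > t_\e) = e^{-\beta_\e t_\e}$, which is $\lqq e^{-\Const\,\delta_\e^{-1}}$. On $\{\tau^\e \lqq t_\e\}$ we have $[\e\xi]_{\tau^\e} \lqq [\e\xi]_{t_\e}$ by monotonicity, so it suffices to bound $\PP([\e\xi]_{t_\e} > C\delta_\e^p)$. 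For this I would apply an exponential Markov (Chernoff) bound: since $[\e\xi]_{t_\e}$ is a sum/integral of a compound-Poisson-type increasing L\'evy functional with jumps of size $\lqq \e^2(\rho^\e)^2$, its Laplace transform $\EE \exp(\lambda[\e\xi]_{t_\e})$ is finite for all $\lambda \lqq \lambda_0/(\e^2(\rho^\e)^2)$ and equals $\exp\big(t_\e\,\psi_\e(\lambda)\big)$ with $\psi_\e(\lambda) \lqq 2\lambda\e^2\sigma_\e^2$ in that range. Choosing $\lambda = \Const/(\e^2(\rho^\e)^2)$ and optimizing gives
\[
\PP\big([\e\xi]_{t_\e} > C\delta_\e^p\big) \lqq \exp\Big(-\frac{\Const\,C\,\delta_\e^p}{\e^2(\rho^\e)^2} + \frac{\Const\,t_\e\,\e^2\sigma_\e^2}{\e^2(\rho^\e)^2}\Big) \lqq \exp\Big(-\frac{\Const\,C\,\delta_\e^p}{\e^2(\rho^\e)^2} + \Const\,\delta_\e^{-1}\Big).
\]
The hypothesis \eqref{eq: rho-delta limit} is exactly what forces $\dfrac{\delta_\e^p}{\e^2(\rho^\e)^2} = \Big(\dfrac{\delta_\e^{(p+1)/2}}{\e\rho^\e}\Big)^2 \delta_\e^{-1} \gg \delta_\e^{-1}$, so the first (negative) exponent dominates $\delta_\e^{-1}$ and, after absorbing the $+\Const\,\delta_\e^{-1}$ and tuning constants, the bound $e^{-C\delta_\e^{-1}+1}$ follows for $\e$ small; combining with the $\{\tau^\e > t_\e\}$ contribution $e^{-\Const\,\delta_\e^{-1}}$ completes the estimate.

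The main obstacle I anticipate is getting the constants right in the Chernoff step uniformly in $\e$: one must verify that the admissible range of $\lambda$ in the Laplace transform of the truncated jump functional is wide enough (i.e. $\lambda$ of order $\e^{-2}(\rho^\e)^{-2}$ is allowed, which is where jumps being bounded by $\rho^\e$ is essential) and that the bound $\psi_\e(\lambda)\lqq 2\lambda\e^2\sigma_\e^2$ — coming from $e^u - 1 \lqq 2u$ for $u$ in a bounded range — holds on exactly that range. The secondary bookkeeping point is the correct choice of $t_\e$ (and of $\beta_\e$'s asymptotics, which depends on $\rho^\e$ via $\beta_\e = h(\rho^\e)$ up to slowly varying factors) so that $\beta_\e t_\e \asymp \delta_\e^{-1}$; this is harmless but needs to be stated. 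Everything else is the routine Jensen/regular-variation estimate already used in Lemma \ref{lem: drift estimate}, applied now to $\|z\|^2$ against $\nu$ on $\bB_{\rho^\e}$.
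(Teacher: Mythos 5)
Your plan correctly identifies the relevant machinery — an exponential Chebyshev bound for the pure-jump quadratic variation $[\e\xi]^d$, which is a subordinator with jumps bounded by $(\e\rho^\e)^2$, followed by $e^u-1\lqq 2u$ — but it runs into a genuine gap because you take the displayed $\tau^\e$ at face value as the first $\mathrm{EXP}(\beta_\e)$ waiting time. The paper's own proof (and the lemma's hypothesis ``for all $T>0$'', where $T$ never appears in the asserted bound) treats the quadratic variation over a \emph{fixed deterministic} horizon $T$; the quantity $\tau^\e$ in the displayed inequality is almost certainly a typo for $T$, and the lemma is subsequently invoked only with $[\e\tilde\xi]^d_T$. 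Taking $\tau^\e$ literally, your splitting of $\{\tau^\e\lqq t_\e\}$ from $\{\tau^\e>t_\e\}$ forces $\beta_\e t_\e\gtrsim\delta_\e^{-1}$, hence $t_\e\gtrsim\delta_\e^{-1}/\beta_\e\to\infty$ (since $\beta_\e=h(\rho^\e)\searrow 0$). With your crude estimate $\sigma_\e^2\lesssim(\rho^\e)^2$ and your choice $\la\asymp(\e\rho^\e)^{-2}$, the positive term in the Chernoff exponent is $t_\e\psi_\e(\la)\lesssim t_\e\,\sigma_\e^2/(\rho^\e)^2\lesssim t_\e\asymp\delta_\e^{-1}/\beta_\e$, which is $\gg\delta_\e^{-1}$, \emph{not} $\lesssim\delta_\e^{-1}$ as you assert. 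For the negative term $\delta_\e^p/(\e\rho^\e)^2$ to swallow it one needs $\beta_\e\,\delta_\e^{p+1}/(\e\rho^\e)^2\to\infty$, a condition strictly stronger than (\ref{eq: rho-delta limit}); it depends on $\alpha$ through $\beta_\e\approx(\rho^\e)^{-\alpha}$ and fails to be automatic for the parameters $\delta_\e=\e^\gamma$, $\rho^\e=\e^{-\rho}$ used when the lemma is applied. No choice of $t_\e$ fixes this: the admissible window $\delta_\e^{-1}/\beta_\e\lesssim t_\e\lesssim\delta_\e^p/(\e\rho^\e)^2$ is nonempty only under that same extra condition.

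A secondary point concerns the Chebyshev parameter. The paper sets $\la=\delta_\e^{-(p+1)}$, which is calibrated so that $\la\cdot C\delta_\e^p=C\delta_\e^{-1}$ produces exactly the advertised exponent, while $\la\e^2 r^2\lqq(\e\rho^\e)^2/\delta_\e^{p+1}\to 0$ by (\ref{eq: rho-delta limit}), so $e^u-1\lqq 2u$ holds eventually without tuning constants and the error $T\!\int_{0<r\lqq\rho^\e}(e^{\la\e^2r^2}-1)\,|h|(dr)\lesssim T(\e\rho^\e)^2/\delta_\e^{p+1}\to 0$. Your $\la\asymp(\e\rho^\e)^{-2}$ is larger (since $\e^2(\rho^\e)^2\ll\delta_\e^{p+1}$), which gives a stronger negative term but only makes $\la\e^2r^2$ \emph{bounded}, not vanishing, so $e^u-1\lqq 2u$ needs a small-constant caveat. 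For a fixed $T$ either choice works, but the paper's is the one that yields the stated bound $e^{-C\delta_\e^{-1}+1}$ directly. If you rewrite your argument with deterministic $T$ in place of $\tau^\e$ and take $\la=\delta_\e^{-(p+1)}$, the gap disappears and you recover the paper's proof.
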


\begin{proof}
The discontinuous part of the quadratic variation process $[\e \ti  \xi]_t^d = [\e \ti  \xi]_t - \trace(A) \e^2 t $ 
is a L\'evy subordinator and has the representation 
\begin{align*}
[\e \ti \xi]_t^d 
= \e^2 \sum_{s\lqq t} \|\Delta \ti \xi\|^2_s 
= \e^2 \int_{(0, t]}\int_{0<\|z\|\lqq \rho^\e} \|z\|^2 N(dz, ds)\quad t\gqq 0\mbox{ a.s.}  
\end{align*}
Since the jumps of $[\e \ti  \xi]_t^d$ by construction are bounded by $(\e\rho^\e)^2\lqq 1$, 
its Laplace transform is well-defined for all $\la\in \RR$ and $t\gqq 0$
\begin{align*}
\EE\left[e^{\la [\e \ti \xi ]_t^d]}\right] 
&= \exp\big(t \int_{0< \|y\|\lqq \rho^\e} (e^{\la^2\e^2 \|y\|^2} -1) \nu(dy)\big)\\
&= \exp\big( - t\int_{0< r\lqq \rho^\e} (e^{\la^2\e^2 r^2}-1) h(dr) \big).
\end{align*}
For any $\la>0$ the exponential Chebyshev inequality yields 
\begin{align*}
 \PP\big([\e \ti  \xi]_{T}^d >C\delta_\e^p \big) &\lqq \PP\big(e^{\la [\e \ti  \xi]^d_{T}} > e^{\la \delta_\e}\big) 
 \lqq e^{-\la C \delta_\e^p} \EE\big[e^{\la [\e \ti  \xi]^d_{T}}\big] \\
&=  \exp\big(-\la C \delta_\e^p - T \int_{0< r\lqq \rho^\e} (e^{\la^2 \e^2 r^2} -1) h(dr) \big).
\end{align*}
We continue with the help of $e^{s}-1 \lqq 2s$ for small $s$. Replacing $\la$ by $\delta_\e^{-(p+1)}$
we ensure the smallness of the argument noting that by (\ref{eq: rho-delta limit}) 
$\sup_{0 < r\lqq \rho^\e} \e^2 r^2 /\delta_\e^{p+1} \lqq (\e \rho^\e)^2/\delta_\e^{p+1} \ra 0$ for $\e\ra 0+$. 
We obtain 
\begin{align*}
& \big|T \int_{0<r\lqq \rho^\e} (e^{\e^2 r^2/\delta_\e^{p+1}}-1) h(dr) \big| \\
& \lqq \big|2 T \e^2 /\delta_\e^{p+1} \big(\int_{0< r\lqq 1} + \int_{1 < r\lqq \rho^\e} \big) r^2 h(dr) \big|\\[1mm]
& \lqq 2 T \e^2 /\delta_\e^{p+1} \big|\int_{0< r\lqq 1} r^2 h(dr) \big| 
+ 2 T (\e \rho^\e)^2/\delta_\e^{p+1}   |\int_{1 < r\lqq \rho^\e} h(dr)\big|\\[2mm]
& \lqq c T (\e \rho^\e)^2/\delta_\e^{p+1}.
\end{align*}
Therefore by (\ref{eq: rho-delta limit}) there is $\e_0\in (0,1)$ such that $\e\in (0, \e_0]$ implies the final result
\begin{align*}
\PP([\e\ti  \xi]_{T} > C\delta_\e^p) & \lqq \exp\big(-C \delta_\e^{-1} + \trace(A) \e^{2} T 
+ cT (\e \rho^\e)^2/\delta_\e^{p+1}\big) 
\lqq \exp\big(-C\delta_\e^{-1}+1 \big).
\end{align*}
\end{proof}
\noindent In the following lemma we estimate the deviation of the stochastic integral with respect to the (local) martingale part $\ti \xi^\e$ 
of the small jumps noise process $\xi^\e$
\[
\ti \xi^\e_t = A^{1/2} B_t + \int_{0< \|y\| \lqq \rho^\e} y \ti N(t, dy).
\]

\begin{lem}\label{lem: bounded stoch int} Let $(g_t)_{t\gqq 0}$ be an adapted, c\`adl\`ag process with bounded values 
by $C_g$ in $\RR^{m\otimes d}$ for a suitable matrix norm. 
For all $T>0$ and functions $\delta_\e$ and $\rho^\e$ satisfying (\ref{eq: rho-delta limit}) for $p=4$
there is $\e_0\in (0,1)$ and a constant $C_0>0$ such that for $\e \in (0, \e_0]$ 
\begin{align*}
\PP(\sup_{s\in [0, T]} \e \sum_{i=1}^d \big|\sum_{j=1}^m \int_0^t g_{s-}^{ij} d\ti  \xi^j(s) \big| > \delta_\e) 
\lqq \exp(-C_0 \delta_\e^{-1} + \ln(6d)).
\end{align*}
\end{lem}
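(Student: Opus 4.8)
The strategy is to reduce the vector-valued, multi-component integral to scalar exponential-martingale estimates, using the preceding two lemmas to control the quadratic variation and the drift. First I would observe that, by the triangle inequality and a union bound over the $d$ outer and $m$ inner indices, it suffices to bound $\PP(\sup_{s\in[0,T]} \e |\int_0^s g^{ij}_{u-} d\ti\xi^j(u)| > \delta_\e / (dm))$ for each fixed pair $(i,j)$; absorbing the factor $dm$ into the constant $C_0$ and into the $\ln(6d)$ term is harmless. So fix the scalar stochastic integral $M^\e_t := \e\int_0^t g_{u-}\, d\ti\xi^j(u)$, where $g$ is adapted, c\`adl\`ag and bounded by $C_g$, and $\ti\xi^j$ is (a component of) the L\'evy martingale $\ti\xi^\e$ with jumps bounded by $\rho^\e$, so that the jumps of $M^\e$ are bounded by $\e C_g \rho^\e$, which tends to $0$ and is in particular $\lqq 1$ for small $\e$.

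Next I would invoke the exponential martingale inequality for c\`adl\`ag local martingales with bounded jumps: for $\la>0$ the process $\exp(\la M^\e_t - \frac{1}{2}\la^2 \langle (M^\e)^c\rangle_t - \int_0^t\!\int (e^{\la \Delta} - 1 - \la\Delta)\,\dots)$ is a supermartingale, and a standard consequence (see e.g.\ the estimates used in Lemma \ref{lem: quadratic variation}) is that on the event where the (predictable or optional) quadratic variation $[M^\e]_T$ is bounded by some deterministic level, $\PP(\sup_{s\in[0,T]}|M^\e_s| > \delta_\e)$ decays like $\exp(-c\,\delta_\e^2/[M^\e]_T)$ up to lower-order corrections coming from the bounded jumps. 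Concretely, since $[M^\e]_T \lqq \e^2 C_g^2 [\ti\xi^\e]_T$, I would split according to whether $[\e\ti\xi^\e]_T$ is large or small: on $\{[\e\ti\xi^\e]_T \lqq C\delta_\e^4\}$ I apply the exponential bound with $\la \sim \delta_\e^{-3}$ (chosen so that $\la^2[M^\e]_T \sim \delta_\e^{-6}\cdot\delta_\e^4 = \delta_\e^{-2}$ is the right order and $\la\delta_\e \sim \delta_\e^{-2}$ dominates), getting a bound of order $\exp(-c\delta_\e^{-1})$ after checking that the jump-correction term $\int(e^{\la\e\Delta\ti\xi}-1-\la\e\Delta\ti\xi)$ is lower order by the same $e^s - 1\lqq 2s$ argument as in Lemma \ref{lem: quadratic variation}, using $(\e\rho^\e)^2/\delta_\e^{p+1}\to 0$ with $p=4$; on the complementary event $\{[\e\ti\xi^\e]_T > C\delta_\e^4\}$ I apply Lemma \ref{lem: quadratic variation} directly (with $p=4$), which gives probability $\lqq e^{-C\delta_\e^{-1}+1}$. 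Adding the two contributions and the union-bound factor, and renaming constants, yields the claimed $\exp(-C_0\delta_\e^{-1}+\ln(6d))$.

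The main obstacle is the bookkeeping in the exponential estimate: one must verify carefully that with the coupling $\la = \la_\e \sim \delta_\e^{-3}$ and the jump sizes $\lqq \e C_g\rho^\e$, the compensator term $\int_0^T\!\int (e^{\la\e g_{u-}z} - 1 - \la\e g_{u-}z)\,\nu(dz)\,du$ really is of smaller order than $\delta_\e^{-1}$ — this requires both $(\e\rho^\e)^2/\delta_\e^{p+1}\to 0$ to linearize the exponential (Taylor, via $e^s-1-s\lqq s^2$ for $|s|\lqq 1$) and then the finite-second-moment bound $\int_{\|z\|\lqq\rho^\e}\|z\|^2\nu(dz) \lqq (\rho^\e)^2 h(1) + \int_{\|z\|\lqq 1}\|z\|^2\nu(dz) = O((\rho^\e)^2)$ already used in Lemmas \ref{lem: drift estimate} and \ref{lem: quadratic variation}. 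A secondary technical point is that $\ti\xi^\e$ is only a local martingale a priori, so the supermartingale/optional-stopping argument must be run with a localizing sequence and Fatou's lemma, which is routine but should be mentioned. The Brownian part $A^{1/2}B_t$ contributes a term of order $\exp(-c\delta_\e^2/\e^2)$ which is negligible compared with $\exp(-c\delta_\e^{-1})$ since $\e/\delta_\e^{?}\to 0$ under \eqref{eq: rho-delta limit}, and can be folded into the same estimate.
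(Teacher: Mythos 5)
Your overall architecture matches the paper's: reduce to scalar estimates, condition on the event $\{[\e M]_T\lqq\delta_\e^4\}$, apply an exponential supermartingale bound there, and treat the complementary event via Lemma~\ref{lem: quadratic variation}; then union-bound over components (the paper works with the $d$ scalar martingales $M^i_t=\sum_j\int_0^t g^{ij}_{s-}d\ti\xi^j_s$, keeping the inner $j$-sum intact, and pays a $\ln(2d)$ after splitting into $\pm$ deviations, whereas your $dm$-split just costs an absorbable constant). It also correctly identifies that the Brownian QV $\e^2\trace(A)T$ is negligible compared to $\delta_\e^4$ under \eqref{eq: rho-delta limit}.

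The genuine problem is your coupling $\la\sim\delta_\e^{-3}$. The exponential bound gives, roughly, $\exp\bigl(-\la\delta_\e+\la^2\Upsilon(\la C_g\e\rho^\e)[\e M]_T\bigr)$ on $\{[\e M]_T\lqq\delta_\e^4\}$, and for $\Upsilon(\la C_g\e\rho^\e)$ to be close to $\tfrac12$ (equivalently, for your $e^s-1-s\lqq s^2$ linearization) you need $\la\,\e\rho^\e\to 0$. With $\la=\delta_\e^{-3}$ this asks for $\e\rho^\e/\delta_\e^3\to 0$, but \eqref{eq: rho-delta limit} with $p=4$ only guarantees $\e\rho^\e/\delta_\e^{5/2}\to 0$, so $\e\rho^\e/\delta_\e^3 = o(\delta_\e^{-1/2})$ may blow up, and the jump-correction control you sketch would not follow. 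Your own parenthetical also betrays the slip: with $\la=\delta_\e^{-3}$, $\la\delta_\e$ and $\la^2[\e M]_T$ are \emph{both} of order $\delta_\e^{-2}$ (neither dominates), and the advertised rate would be $\exp(-c\delta_\e^{-2})$, not the $\exp(-c\delta_\e^{-1})$ you state. The paper's choice $\la=\delta_\e^{-2}$ is the one that works: then $\la\delta_\e=\delta_\e^{-1}$, $\la^2\delta_\e^4=1$ is an $O(1)$ correction, and $\la\,\e\rho^\e=\e\rho^\e/\delta_\e^2\to 0$ does follow from \eqref{eq: rho-delta limit} with $p=4$. Replacing your $\delta_\e^{-3}$ by $\delta_\e^{-2}$ repairs the argument and makes it essentially identical to the paper's.
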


\begin{proof} Suppose $\max_{i,j} \sup_{t\gqq 0} |g_t^{ij}| \lqq C_g$ almost surely. 
We consider the each component of the $d$-dimensional martingale 
\begin{align*}
M_t^i = \sum_{j=1}^m \int_0^t g^{ij}_{s-} d\ti  \xi^j(s).
\end{align*}
By construction $\|\Delta_t M\| \lqq m d C_g \rho^\e =: C \rho^\e$ almost surely. 
We estimate the probability of a deviation of size $\delta_\e$ from zero 
conditioned on small quadratic variation
\begin{equation}\label{eq: estimate of small noise integral} 
\PP(\sup_{s\in [0,T]} \|\e M_s\| > \delta_\e) 
\lqq \PP(\sup_{s\in [0,T]} \|\e M_s\| > \delta_\e ~|~ [\e M]_{T} \lqq \delta_\e^4) 
+ \PP([\e M]_{T} > \delta_\e^4).
\end{equation}
\textbf{Step 1:} We estimate the first term of inequality (\ref{eq: estimate of small noise integral}). 
Following the lines of the proofs of Lemma~{26.19} and Theorem~{26.17} part (i)
 in \cite{Kallenberg-02} 
we find the following estimate. For any $\la>0$  
\begin{equation*}
\PP(\sup_{s\in [0,T]} \e M_s^i > \delta_\e ~|~ [\e M]_{T} \lqq \delta_\e^4) \lqq 
\exp\big(-\la \delta_\e + \la^2 \Upsilon(\la C_g \e \rho^\e) \delta_\e^4 \big), 
\end{equation*}
where $\Upsilon: (0, \infty) \ra (0, \infty), \Upsilon(x) = - (x + \ln(1-x)_+) x^{-2}$. 
Replacing $\la$ by $\la_\e = \delta_\e^{-2}$ and keeping in mind that 
$\lim_{\e \ra 0+} \Upsilon(\la C_g  \e \rho^\e) = \frac{1}{2}$ 
yields 
\begin{equation*}
\PP(\sup_{s\in [0,T]} \e M_s^i > \delta_\e ~|~ [\e M]_{T} \lqq \delta_\e^4) \lqq 
\exp\big(-\delta_\e^{-1}). 
\end{equation*}
For the infimum of the negative analogue holds the respective estimate, 
which provides for each $i$ for $\la_\e = d\delta_{\e}^{-2}$ instead
\begin{equation*}
\PP(\sup_{s\in [0,T]} |\e M_s^i| > \frac{\delta_\e}{d} ~|~ [\e M]_{T} \lqq \delta_\e^4) \lqq \exp\big(-\delta_\e^{-1}+\ln(2)), 
 \end{equation*}
where the right-hand side does not depend on $i$, such that eventually
\begin{align*}
&\PP(\sup_{s\in [0,T]} \|\e M_s\| > \delta_\e ~|~ [\e M]_{T} \lqq \delta_\e^4) \\
& \lqq \sum_{i=1}^d \PP(\sup_{s\in [0,T]} |\e M_s^i| > \frac{\delta_\e}{m} ~|~ [\e M]_{T} \lqq \delta_\e^4)\\[2mm]
&\lqq \exp\big(-\delta_\e^{-1}+\ln(2d)). 
\end{align*}
\textbf{Step 2:} We treat the second term in inequality (\ref{eq: estimate of small noise integral}). 
The boundedness assumption of $g$ yields 
\begin{align*}
[\e M]_t = \int_0^t \|g_{s-}^* g_{s-}\|^2 d[\e A^{\frac{1}{2}}B]_s + \int_0^t \|g_{s-}^* g_{s-}\|^2 d[\e  \ti \xi]_s^d 
\lqq C^2 (\e^2 \trace(A) t+ [\e \ti  \xi]_t^d), \quad t\gqq 0. 
\end{align*}
Hence 
\begin{align*}
\PP([\e M]_{T} \gqq \delta_\e^4) \lqq \PP(C^2 [\e \ti  \xi]_{T}^d
\gqq \frac{1}{2} \delta_\e^4) + \PP( C^2 \trace(A) \e^{2} T \gqq \frac{1}{2}\delta_\e^4).
\end{align*}
The second term vanishes by (\ref{eq: rho-delta limit}), 
which implies $\e^2  < \delta_\e^4$ for small $\e\in (0,1)$. 
The first term is treated as in Lemma~\ref{lem: quadratic variation}. 
Eventually 
\begin{align*}
\PP([\e M]_{T} \gqq \delta_\e^4) 
\lqq \PP([\e \ti  \xi]_{T}^d \gqq \frac{1}{2 C^2}\delta_\e^4)
\lqq  \exp(-\frac{\delta_\e^{-1}}{2 C^2} + 1).  
\end{align*}
Combining Step 1 and 2 yields a constant $\e_0\in (0,1)$ such that for all $\e\in (0, \e_0]$ 
\begin{align*}
\PP(\sup_{s\in [0,T]} \|\e M_s\| > \delta_\e) \lqq  \exp(-\min(1, \frac{1}{2 C^2})\delta_\e^{-1} + \ln(2de)). 
\end{align*}
This finishes the proof. 
\end{proof}

\subsection{Localization of $V^\e$ close to $u$ up to a fixed time}

Let $V^\e$ be the solution of equation (\ref{eq: sde}), 
where the driving noise $Z$ is replaced by the $\e$-dependent 
small jumps part $\xi^\e$ of $Z$ as definied in (\ref{def: b eps}). 
The first large jump time $T_1>0$ is exponentially distributed by with intensity $\beta_\e \searrow 0$ 
as $\e \searrow 0$. By definition then 
\[
V^\e_{t, x} = X^\e_{t, x} \qquad \mbox{ for } t\in [0, T_1). 
\]
In order to study the fluctions of $X^\e_{t,x}$ for $t< T_1$ we introduce 
\[
\TT^*_x(\e) := \inf\{t>0~|~V^\e_{t,x} \notin D\}.
\]

\begin{lem}[Non-exit up to fixed times]\label{lem: localization in a ball}
For any $T\gqq 0$ there is $\e_0\in (0,1)$ such that for all $\e \in (0, \e_0]$ 
and $\delta_\e$ satisfying (\ref{eq: rho-delta limit}) there
\begin{align*}
&~  \sup_{x\in D_{\delta_\e}} \PP(\TT^*_x(\e) \lqq T) \lqq \exp(-\delta_\e^{-1} + 2+ \ln(d)).
\end{align*}
\end{lem}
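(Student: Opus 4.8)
The plan is to compare the perturbed process $V^\e$ with the deterministic flow $u(\cdot;x)$ on the fixed time interval $[0,T]$ and show that, with overwhelming probability, the difference stays below $\delta_\e$, so that $V^\e$ cannot reach $\partial D$ before time $T$ when started in the reduced domain $D_{\delta_\e}$. Concretely, I would fix $x\in D_{\delta_\e}$ and, writing $\Delta_t := V^\e_{t,x} - u(t;x)$ on the event $\{\TT^*_x(\e)>t\}$, subtract the integral equation \eqref{eq: sde} (with $Z$ replaced by $\xi^\e$, so no large jumps occur) from \eqref{eq: det}. This yields
\[
\Delta_t = \int_0^t \big(f(V^\e_{s,x}) - f(u(s;x))\big)\,ds + \e\int_0^t H(V^\e_{s,x})b_\e\,ds + \e\int_0^t F(V^\e_{s-,x})\,d\ti\xi^\e_s + \int_0^t\int_{\|z\|\lqq\rho^\e}\big(G(V^\e_{s-,x},\e z) - V^\e_{s-,x}\cdots\big)\ti N(ds,dz),
\]
so the idea is: the first term is controlled by the local Lipschitz bound in (S.2)(1), the remaining three terms are the "noise" contributions, each of which is bounded by a quantity of order $\delta_\e$ with probability $1 - \exp(-c\delta_\e^{-1}+\text{const})$ by the three preceding lemmas (Lemma~\ref{lem: drift estimate} for the $\e b_\e$ drift, Lemma~\ref{lem: quadratic variation} and Lemma~\ref{lem: bounded stoch int} for the compensated small-jump integrals — note the small-ball coefficient bound (S.2)(4) makes the integrand bounded as long as $V^\e$ stays near $\aA$, and more crudely (S.2)(2) gives boundedness on all of $D$).

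The technical heart is a Gronwall argument combined with a stopping-time/bootstrap. I would introduce the stopping time $\sigma_\e := \inf\{t>0 : \|\e b_\e t\| + \e\|\text{(stochastic integral)}_t\| + \|\text{(compensated jump integral)}_t\| > \tfrac12\delta_\e\}$ (each piece dealt with separately, summing the three exceptional probabilities), so that on $[0,\sigma_\e\wedge\TT^*_x(\e))$ one has $\|\Delta_t\| \lqq L^2\int_0^t\|\Delta_s\|\,ds + \tfrac12\delta_\e$; Gronwall then gives $\|\Delta_t\| \lqq \tfrac12\delta_\e e^{L^2 T} $ on that interval. Choosing $\e_0$ small enough that $\tfrac12 e^{L^2T}\delta_\e < \delta_\e$ — here one may need to shrink $\delta_\e$ by the fixed factor $e^{L^2T}$, or equivalently apply the noise lemmas with $C = \tfrac12 e^{-L^2T}$ in front — ensures $\|\Delta_t\| < \delta_\e$ throughout. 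Since $x\in D_{\delta_\e}$ means $\dist(u(t;x),\partial D)>0$ for all $t$ (by Remark~\ref{lrm: properties of reduced domains}, $u(t,D_{\delta_\e})\subset D_{\delta_\e}$, so the flow stays at distance $\gqq\delta_\e$ from $\partial D$ — more precisely $u(t;x)\in D_{\delta_\e}$ so $\dist(u(t;x),\partial D)\gqq\delta_\e$), the bound $\|\Delta_t\|<\delta_\e$ forces $V^\e_{t,x}\in D$ for all $t\lqq T$, i.e. $\{\sigma_\e > T\}\cap\{\TT^*_x(\e)>\text{that time}\}$ actually implies $\TT^*_x(\e) > T$. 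Hence $\{\TT^*_x(\e)\lqq T\}\subset\{\sigma_\e\lqq T\}$, and the latter has probability at most the sum of the three exceptional probabilities.

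Summing: Lemma~\ref{lem: drift estimate} makes the drift term deterministically smaller than its allotment for small $\e$ (zero exceptional probability), Lemma~\ref{lem: quadratic variation}/\ref{lem: bounded stoch int} contribute $\exp(-c\delta_\e^{-1}+\ln(\text{const}\cdot d))$ each, and the compensated-large-end small-jump integral $\int\int_{\|z\|\lqq\rho^\e}(G(\cdot,\e z)-\cdots)\ti N$ is handled exactly like Lemma~\ref{lem: bounded stoch int} after noting that its quadratic variation is controlled via the Lipschitz bound in (S.2)(1) restricted to $\bB_1$. Carefully tracking the additive constants in the exponents and absorbing them gives the claimed bound $\exp(-\delta_\e^{-1} + 2 + \ln d)$, uniformly over $x\in D_{\delta_\e}$ since none of the estimates depends on the starting point beyond the boundedness constants. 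The main obstacle I anticipate is the bookkeeping: ensuring that the Gronwall blow-up factor $e^{L^2T}$, the split of $\delta_\e$ into three pieces, and the additive logarithmic constants from the three lemmas all combine to fit under the single clean constant $2+\ln d$ — this requires choosing the constants $C$ in the invocations of the preceding lemmas appropriately (and possibly replacing $\delta_\e$ by a fixed multiple of itself, which is harmless since \eqref{eq: rho-delta limit} is scale-invariant in $\delta_\e$ up to constants) and then taking $\e_0$ small.
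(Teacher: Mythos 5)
Your proposal is correct and follows essentially the same route as the paper's proof: compare $V^\e$ with the deterministic flow via the integral equation, apply Gronwall to absorb the Lipschitz term, bound the drift deterministically via Lemma~\ref{lem: drift estimate}, and control the local martingale part by conditioning on small quadratic variation with the exponential martingale inequality from Lemmas~\ref{lem: quadratic variation}--\ref{lem: bounded stoch int} (with \textbf{(S.2)}(4) giving the uniform jump bound). The only cosmetic differences are that you phrase the argument with a stopping time $\sigma_\e$ where the paper conditions on $\{[\e\xi]_T\lqq\delta_\e^4\}$, and that you propose absorbing the Gronwall factor $e^{L^2T}$ by rescaling $\delta_\e$ rather than by the paper's choice of $\la=2d/\delta_\e^2$; neither affects the substance.
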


\begin{proof} 
By Remark \ref{lrm: properties of reduced domains} for any sufficiently small $\delta_\e$ and $x\in D_{\delta_\e}$ follows 
\[
\dist(u(t;x), \pd D)\gqq \delta_\e \qquad \forall t\gqq 0.  
\]
Since $\TT_x^*(\e)$ denotes the exit from $D$, we infer  
\begin{align*}
\{\TT_x^*(\e) \lqq T\} &= \{\TT_x^*(\e) \lqq T\}\cap \{\sup_{t\in [0, \TT_x^*(\e)]} \|V_{t,x} - u(t;x)\| > \delta_\e\}. 
\end{align*}
We lighten notatoin $V = V^\e$, $\TT^* = \TT^+_x(\e)$ etc. Then for $t\lqq T$ follows by definition 
\begin{align}
& V_{t\wedge \TT^*,x} - u(t\wedge \TT^*; x)\nonumber\\
& = \int_{0}^{t\wedge \TT^*} f(V_{s,x}) - f(u(s; x)) ds +  
\e \int_{0}^{t\wedge \TT^*}  H(V_{s,x}) b_\e ds + \e \int_{0}^{t\wedge \TT^*} F(V_{s,x}) dA^{\frac{1}{2}} B_s  \nonumber\\
& \qquad + \int_{0}^{t\wedge \TT^*}\int_{0< \|z\|\lqq \rho^\e} G(V_{s-,x}, \e z) \ti N(ds, dz). \label{eq: V-u}
\end{align}
We fix the constant 
\begin{equation}\label{def: CR}
C_{D} := \sup_{\substack{v\in D\\ w\in \bB_1}}\max\{L, \|f(v)\|, \|H(v)\|, \|F(v)\|, \|G(v,w)\|\}. 
\end{equation}
The global Lipschitz property of $f$ on $D$ and 
the standard integral version of Gronwall's lemma 
yield
\begin{multline}\label{eq: sup V-u}
\sup_{x\in D_{\delta_\e}} \sup_{t\in [0, T\wedge \TT_x^*]} \|V_{t,x} - u(t; x)\| \\
\lqq e^{C_{D} T} \sup_{x\in D_{\delta_\e}}\sup_{t\in [0, T\wedge\TT_x]}
 \|\e \int_{0}^{t}  H(V_{s,x}) b_\e ds + \e \int_{0}^{t} F(V_{s,x}) dA^{\frac{1}{2}} B_s \\
 + \int_{0}^{t}\int_{0< \|z\|\lqq \rho^\e} G(V_{s-,x}, \e z) \ti N(ds, dz)\|.
\end{multline}
The representation (\ref{eq: V-u}) 
has the (local) martingale part 
\begin{equation}
M_{t,x} := \e \int_0^t F(V_{s, x}) d (A^{\frac{1}{2}}B_s) 
+ \int_0^t \int_{0<\|z\| \lqq \rho^\e} G(V_{s-,x}, \e z) \ti N(ds, dz). \label{eq: def m}
\end{equation}
The previous lemma yields for $i$-th component $M^i_{t,x}$ 
and any $\la>0$ 
\begin{align}
& \sup_{x\in D_{\delta_\e}} \PP(\TT_x^*(\e) \lqq T) \nonumber\\
& = \sup_{x\in D_{\delta_\e}} \PP( \TT_x^*(\e) \lqq T, 
\sup_{t\in [0, T \wedge \TT^*]} \|V_{t,x}- u(t;x)\| > \delta_\e) \nonumber\\ 
& \lqq \sup_{x\in D_{\delta_\e}} \PP(\TT^*_x(\e) \lqq T, 
\sup_{t\in [0, T \wedge \TT^*] } e^{C_{D} T} \e \|\int_0^t H(V_{s,x}) b_\e ds\| > \frac{\delta_\e}{2}) \nonumber\\
& \qquad + \sup_{x\in D_{\delta_\e}} \PP( \TT^*_x(\e) \lqq T, 
\sup_{t\in [0,T \wedge \TT^*]} e^{C_{D} T} \|M_{t,x}\| > \frac{\delta_\e}{2}~|~[\e \xi]_{T} \lqq \delta_\e^4) \nonumber\\
& \qquad + \PP([\e \xi]_{T} > \delta_\e^4)\nonumber\\
&~\lqq  \PP(\e \|b_\e\|  e^{C_{D} T} T C_{D} > \frac{\delta_\e}{2}) + \PP([\e \xi]_{T} > \delta_\e^4)\nonumber\\
&\qquad  + \sum_{i=1}^d \sup_{x\in D_{\delta_\e}}\PP(\sup_{t\in [0, T \wedge \TT^*]} M_{t,x}^i > \frac{\delta_\e}{2d}~|~
[\e \xi]_{T} \lqq \delta_\e^4) 
+ \sup_{x\in D_{2\delta_\e}}\PP(\sup_{t\in [0, T \wedge \TT^* ]} M_{t,x}^i < -\frac{\delta_\e}{2d}~|~[\e \xi]_{T} \lqq \delta_\e^4)\nonumber\\
&~\lqq  \exp(-\delta_\e^{-1} +1) + 2d \exp\big(-\la \frac{\delta_\e}{2d} + \la^2 \Upsilon(C_{D} \la) \delta_\e^2 \big).\label{eq: P(V-u)}
\end{align}
The vanshing of the formal first term in the third to last line is the direct consequence of Lemma~\ref{lem: drift estimate}. 
We note that the last inequality is valid for any local martingale with jumps bounded from above by~$C_{D}$. 
This is satisfied since by (\ref{eq: rho-delta limit 0}) $\lim_{\e\ra 0+} \e \rho^\e = 0$ 
and for $x\in D$ and $s\in [0, \TT^*_x]$
\[
\|\Delta_s V^\e_{\cdot, x}\|\lqq \sup_{\substack{v\in \bB_{D}\\ w \in \bB_{\e \rho^\e}}}\|G(v, w)\| \lqq C_{D},
\]
where the last inequality stems from \textbf{(S.2)} part 4. 
We may now replace in inequality (\ref{eq: P(V-u)}) $\la$ 
by $2d/ \delta_\e^2$ and exploit that $\lim_{r\ra \infty}\Upsilon(r) = \frac{1}{2}$. 
This yields the desired estimate and finishes the proof. 
\end{proof}

\begin{cor}[Localization up to a fixed time $T$]\label{cor: V close to u}
For all $T>0$ there is $\e_0 \in (0,1)$ such that for all 
\mbox{$\e\in (0,\e_0]$} and $\delta_\e$ satisfying (\ref{eq: rho-delta limit}) 
follows
\begin{align*}
&~ \sup_{x\in D_{\delta_\e}} \PP(\sup_{s\in [0, T]} \|V^\e_{s,x} - u(s;x)\| > \delta_\e)\lqq \exp(-\delta_\e^{-1} + 3+ \ln(d)).
\end{align*}
\end{cor}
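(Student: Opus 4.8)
The plan is to deduce the corollary directly from Lemma~\ref{lem: localization in a ball} together with the estimates already assembled inside its proof, with no new analytic input. The two statements differ only in that the corollary bounds the deviation of $V^\e$ from the deterministic flow up to the \emph{fixed} time $T$, whereas the lemma bounds the exit time $\TT^*_x(\e)$; the bridge is the elementary observation that on $\{\TT^*_x(\e)>T\}$ the stopped trajectory $V^\e_{\cdot\wedge\TT^*_x(\e),x}$ coincides with $V^\e_{\cdot,x}$ on all of $[0,T]$.

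Concretely, for $x\in D_{\delta_\e}$ I would split, using that $T\wedge\TT^*_x(\e)=T$ on $\{\TT^*_x(\e)>T\}$,
\[
\PP\Big(\sup_{s\in[0,T]}\|V^\e_{s,x}-u(s;x)\|>\delta_\e\Big)\ \le\ \PP\big(\TT^*_x(\e)\le T\big)\ +\ \PP\Big(\sup_{s\in[0,T\wedge\TT^*_x(\e)]}\|V^\e_{s,x}-u(s;x)\|>\delta_\e\Big).
\]
The first summand is bounded, uniformly over $x\in D_{\delta_\e}$ and for $\e$ small, by $\exp(-\delta_\e^{-1}+2+\ln d)$ --- this is precisely Lemma~\ref{lem: localization in a ball}. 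For the second summand I would re-run the chain of inequalities (\ref{eq: sup V-u})--(\ref{eq: P(V-u)}) from the proof of that lemma: the Gronwall step reduces the deviation of the stopped process to the size of the small-noise drift $\e\int_0^{\cdot}H(V_{s,x})b_\e\,ds$ and of the stopped local martingale $M_{\cdot\wedge\TT^*_x(\e),x}$, whose jumps remain bounded by $C_{D}$ and whose quadratic variation remains dominated by $[\e\xi]_T$; Lemma~\ref{lem: drift estimate} disposes of the drift, while Lemmas~\ref{lem: quadratic variation}--\ref{lem: bounded stoch int} (equivalently, the conditional exponential martingale estimate with the same Laplace parameter $\la=2d/\delta_\e^2$ and the same asymptotics of $\Upsilon$ used there) control the martingale, again producing a bound of the form $\exp(-\delta_\e^{-1}+2+\ln d)$ for $\e$ small. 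In fact the computation in (\ref{eq: P(V-u)}) already estimates this stopped-deviation probability; the intersection with $\{\TT^*_x(\e)\le T\}$ carried along there is redundant and may simply be dropped, since removing it only enlarges the event.

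Adding the two contributions gives $2\exp(-\delta_\e^{-1}+2+\ln d)=\exp(-\delta_\e^{-1}+2+\ln2+\ln d)\le\exp(-\delta_\e^{-1}+3+\ln d)$, which is the asserted estimate and accounts for the additive constant $3$ here in place of the $2$ of Lemma~\ref{lem: localization in a ball}. I do not anticipate a genuine obstacle: all the heavy lifting --- the exponential small-jump and martingale tail bounds --- sits in Lemmas~\ref{lem: drift estimate}--\ref{lem: bounded stoch int} and was already recycled once in Lemma~\ref{lem: localization in a ball}. The only points that need a little care are the bookkeeping of numerical constants so that the exponent lands exactly on $3+\ln d$, and the routine check that the martingale estimates are unaffected by stopping at $\TT^*_x(\e)$ --- a stopped local martingale having neither larger jumps nor larger quadratic variation.
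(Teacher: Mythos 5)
Your proposal is correct and matches the paper's own proof in substance: both decompose the deviation event according to whether $\TT^*_x(\e)$ exceeds $T$, apply Lemma~\ref{lem: localization in a ball} (resp.\ the chain (\ref{eq: V-u})--(\ref{eq: P(V-u)})) to the two pieces, and absorb the resulting factor of $2$ into the passage from $2+\ln d$ to $3+\ln d$ in the exponent. The only difference is cosmetic --- you carry the stopping time $T\wedge\TT^*_x(\e)$ inside the second probability while the paper restricts to $\{\TT^*_x(\e)>T\}$ and then drops the stopping --- which leads to the same estimate.
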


\begin{proof} On the event $\{\TT^*_x > T\}$ we repeat (\ref{eq: V-u}), (\ref{eq: sup V-u}) and (\ref{eq: P(V-u)}) 
replacing $t\wedge \TT^*_x$ by $t\in [0, T]$. This directly yields the desired result. 
\end{proof}

\subsection{Localization and ergodicity of $V^\e$}

\begin{lem} [Non-exit] \label{cor: non-exit}
For functions $\rho^\e$, $\delta_\e$ and $\beta_\e$ satisfing the 
relation (\ref{eq: rho-delta limit}) 
there exist constants $C>0$ and $\e_0\in (0,1)$ 
such that for all $\e \in (0,\e_0]$ 
\begin{align*}
\sup_{x\in D_{\delta_\e}} \PP(\exists\;t\in [0,T_1]: ~V^\e_{t,x} \notin D) \lqq 
\frac{ C e^{-\delta_\e^{-1}}}{(\beta_\e\delta_\e)^2}.
\end{align*}
\end{lem}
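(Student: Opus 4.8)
The plan is to condition on the value of the first large jump time $T_1$ and use the fact that $T_1 \sim \mathrm{EXP}(\beta_\e)$ together with Corollary~\ref{cor: V close to u}, which controls the excursion of $V^\e$ away from the deterministic flow $u(\cdot;x)$ on any fixed time window $[0,T]$. The difficulty is that $T_1$ is unbounded, so one cannot apply Corollary~\ref{cor: V close to u} with a single fixed $T$; instead I would partition the time axis into unit intervals (or intervals of some fixed length) and sum the non-exit failure probabilities over those intervals up to $T_1$, paying a factor proportional to the number of intervals, i.e.\ proportional to $T_1$.

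Concretely: first I would write
\[
\PP(\exists\, t\in[0,T_1]:\ V^\e_{t,x}\notin D)
\lqq \sum_{k\gqq 0}\PP\big(T_1 > k,\ \exists\, t\in[k,(k+1)\wedge T_1]:\ V^\e_{t,x}\notin D\big).
\]
On the event $\{T_1>k\}$ we have $V^\e_{t,x}=X^\e_{t,x}$ for $t\lqq k$, and by the strong Markov property applied at time $k$ (using Remark~\ref{lrm: properties of reduced domains}, so that the deterministic flow started from $V^\e_{k,x}\in D_{\delta_\e}$ stays in $D_{\delta_\e}$, provided we first establish $V^\e_{k,x}\in D_{\delta_\e}$ with high probability) the conditional probability of exiting during $[k,k+1]$ is bounded, via Corollary~\ref{cor: V close to u} with $T=1$, by $\exp(-\delta_\e^{-1}+3+\ln d)$. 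Here one must be a little careful: Corollary~\ref{cor: V close to u} controls $\|V^\e-u\|$ on $D_{\delta_\e}$, but after one unit of time $V^\e$ may have drifted to the edge of $D_{\delta_\e}$; the clean way around this is to run the argument on the slightly larger reduced domain and absorb the loss into constants, or equivalently to note that $u(\cdot;x)$ keeps distance $\gqq\delta_\e$ from $\pd D$ for \emph{all} $t\gqq0$, so the event of exiting in $[k,(k+1)]$ is contained in $\{\sup_{t\in[k,k+1]}\|V^\e_{t,x}-u(t;x)\|>\delta_\e\}$, and this is handled exactly as in the proof of Lemma~\ref{lem: localization in a ball} with the martingale and drift estimates (Lemmas~\ref{lem: drift estimate}--\ref{lem: bounded stoch int}) restarted at time $k$.

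Summing the geometric-type series, using $\PP(T_1>k)=e^{-\beta_\e k}$ and the uniform per-interval bound, gives
\[
\sup_{x\in D_{\delta_\e}}\PP(\exists\,t\in[0,T_1]:\ V^\e_{t,x}\notin D)
\lqq \exp(-\delta_\e^{-1}+3+\ln d)\sum_{k\gqq0}e^{-\beta_\e k}
= \frac{e^3 d\, e^{-\delta_\e^{-1}}}{1-e^{-\beta_\e}}.
\]
Since $\beta_\e\searrow0$, we have $1-e^{-\beta_\e}\sim\beta_\e$, so $1/(1-e^{-\beta_\e})\lqq C/\beta_\e\lqq C/(\beta_\e\delta_\e)^2$ for small $\e$ (as $\beta_\e\delta_\e\to0$), yielding the claimed bound with an appropriate constant $C$. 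The main obstacle is the bookkeeping at the interval endpoints — ensuring that the high-probability event on which $V^\e$ stays close to $u$ over $[0,k]$ can be chained into the next interval without the error accumulating faster than the geometric decay of $\PP(T_1>k)$ allows; the fact that the per-interval failure probability $e^{-\delta_\e^{-1}}$ is super-polynomially small in $\e$ while only polynomially many (in expectation $\beta_\e^{-1}$) intervals matter is what makes the sum converge to something of the stated order, and this is the quantitative heart of the argument.
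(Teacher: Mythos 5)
Your decomposition --- partition the time axis into fixed-length intervals, bound the per-interval exit probability via Corollary~\ref{cor: V close to u}, chain with the Markov property, and sum against the exponential density of $T_1$ --- has the same high-level structure as the paper's proof; the paper truncates the sum at $k\lqq(\beta_\e\delta_\e\sS)^{-1}$ rather than keeping the full geometric series, but that difference is cosmetic. Where your proposal has a genuine gap is exactly at the point you flag as the main obstacle: closing the chain at the interval endpoints. Neither of your two suggested fixes resolves it. Comparing $V^\e$ to the original trajectory $u(\cdot;x)$ over $[k,k+1]$ with the martingale estimates merely ``restarted'' does not work, because the Gronwall factor $e^{C_D T}$ in the proof of Lemma~\ref{lem: localization in a ball} applies over the entire comparison horizon: the deviation $\|V^\e_{k,x}-u(k;x)\|$ inherited from $[0,k]$ is not reset, so the admissible deviation on each subsequent interval would have to shrink by a factor $e^{-C_D}$ and the error accumulates geometrically in $k$. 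Alternatively, restarting the deterministic reference from $V^\e_{k,x}$ requires $V^\e_{k,x}\in D_{\delta_\e}$, but the chaining event together with Remark~\ref{lrm: properties of reduced domains} (positive invariance of $D_{\delta_\e}$) only gives $V^\e_{k,x}\in\bB_{\delta_\e}(D_{\delta_\e})$, which can touch $\pd D$.

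The ingredient you are missing is Remark~\ref{rem D.2}: there is a positively invariant set $\iI$ with $\dist(\pd\iI,\pd D)>0$ into which the flow enters after a fixed time $\sS$, uniformly over all starting points in $D$. The paper uses intervals of length $\sS$ precisely so that at each endpoint $m\sS$, on the good chaining event, $u(\sS;V^\e_{(m-1)\sS,x})\in\iI$ and hence $V^\e_{m\sS,x}\in\bB_{\delta_\e}(\iI)\subset D_{2\delta_\e}$ --- a region uniformly bounded away from $\pd D$ from which the recursion can be restarted with no accumulated loss. With that step in place your summation yields the claimed bound; without it, the uniform per-interval failure probability your geometric series relies on cannot be justified.
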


\begin{proof} 
Due to the independence of $T_1$ and $V^\e$ we calculate 
\begin{align*}
\PP(\exists\;t\in [0,T_1]: ~V^\e_{t,x} \notin D) 
& \lqq \PP(\exists\;t\in [0,\frac{1}{\beta_\e \delta_\e}]: ~V^\e_{t,x} \notin D) 
+ \PP(T_1 > \frac{1}{\beta_\e \delta_\e})  
\end{align*}
By construction 
\[
\PP(T_1 > \frac{1}{\beta_\e \delta_\e}) = e^{-\delta_\e^{-1}} \ra 0.  
\]
Recall by Remark \ref{rem D.2} that $t\gqq \sS$ and $x\in D$ imply $u(t;x) \in \iI$. Hence
\begin{align*}
\PP(\exists\;t\in [0,\frac{1}{\beta_\e \delta_\e}]: ~V^\e_{t,x} \notin D)
& = \int_0^{(\beta_\e \delta_\e)^{-1}} \beta_\e e^{-\beta_\e s} \PP(\exists\;t\in [0,s]: ~V^\e_{t,x} \notin D) ds\\
& \lqq \sum_{k=1}^{\lceil (\beta_\e \delta_\e \sS)^{-1}\rceil} \int_{(k-1)\sS}^{k \sS} 
\beta_\e e^{-\beta_\e s} \PP(\exists\;t\in [0,s]: ~V^\e_{t,x} \notin D) ds\\
& \lqq \sum_{k=1}^{\lceil (\beta_\e \delta_\e \sS)^{-1}\rceil} 
\PP(\exists\;t\in [0,k \sS]: ~V^\e_{t,x} \notin D) e^{-\beta_\e \sS k}.
\end{align*}
We denote by
\[
\eE_x(\e) := \{\sup_{t\in [0, \tT]} \|V_{t,x} -u(t;x)\| \lqq \delta_\e\}.
\]
For the case $k=1$, $x\in D_{\delta_\e}$ Corollary \ref{cor: V close to u} yields 
\[
\PP(\TT_x^* \in [0, \sS]) 
= \PP(\exists\;t\in [0,\sS]: ~V^\e_{t,x} \notin D) 
\lqq \PP(\sup_{t\in [0, \sS]} \|V^\e_{t,x} - u(t;x)\| > \delta_\e) \lqq C e^{-\delta_\e^{-1}}.
\]
Furthermore, Remark \ref{rem D.2} states that 
\[
V^\e_{\sS, x} = V^\e_{\sS, x} -u(\sS;x) + u(\sS;x) \in \bB_{\delta_\e}(0) + \iI \subset D_{2\delta_\e}.
\]
Exploiting the Markov property at time $\sS$ we obtain 
\begin{align*}
&\PP(\TT_x^* \in ((k-1)\sS, k\sS]) \\[2mm]
&= \PP(\{\forall\; t \in [0, (k-1)\sS]: ~V^\e_{t,x} \in D\} 
\cap \{\exists\;t\in [(k-1)\sS,k\sS]: ~V^\e_{t,x} \notin D\}) \\[2mm]
&= \PP(\{\forall t\in [0, (k-1)\sS]: V^\e_{t,x} \in D\} \cap \{\exists t\in [(k-1)\sS, k\sS]: ~V^\e_{t,x}\notin D\} \cap \eE_x) + \PP(\eE_x^c)\\
&\lqq \sup_{x\in D_{2\delta_\e}} \PP(\TT_x^*(\e) \in [(k-2)\sS, (k-1)\sS]) + C e^{-\delta_\e^{-1}}.
\end{align*}
Therefore a recursive argument leads to 
\begin{align*}
\PP(\TT_x^* \in ((k-1)\sS, k\sS]) \lqq k C e^{-\delta_\e^{-1}}.
\end{align*}
Finally summing up we obtain the desired result
\begin{align*}
\PP(\exists\;t\in [0,\frac{1}{\beta_\e \delta_\e}]: ~V^\e_{t,x} \notin D)
&\lqq \sum_{k=1}^{\lceil(\beta_\e\delta_\e\sS)^{-1}\rceil}  k C e^{-\delta_\e^{-1}} 
\lqq \frac{ C e^{-\delta_\e^{-1}}}{(\beta_\e\delta_\e\sS)^2}.
\end{align*}
\end{proof}

\noindent The proof further yields directly that at the time of the first large jump $T_1$ the small noise 
solution $V^\e$ is not far from $\iI$. 

\begin{cor}\label{cor: at jump time in D delta} Let the assumptions of Lemma \ref{cor: non-exit} be fulfilled. 
Then for all $\kappa >0$ there is $\e_0\in (0,1)$ such that for $\e\in (0, \e_0]$ 
\begin{equation}
\sup_{x\in D_{\delta_\e}} \PP(V^\e_{T_1,x} \in \bB_\kappa(\iI)) \lqq 
\frac{ C e^{-\delta_\e^{-1}}}{(\beta_\e\delta_\e)^2}.
\end{equation}
\end{cor}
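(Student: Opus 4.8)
The corollary is the exponential refinement of Lemma~\ref{cor: non-exit} announced in the paragraph preceding it: outside an event of probability at most $Ce^{-\delta_\e^{-1}}/(\beta_\e\delta_\e)^2$ the small-jump solution $V^\e$ has already been absorbed into the $\kappa$-neighbourhood $\bB_\kappa(\iI)$ of $\iI$ by the first large-jump time $T_1$; equivalently, $\PP(V^\e_{T_1,x}\notin\bB_\kappa(\iI))\lqq Ce^{-\delta_\e^{-1}}/(\beta_\e\delta_\e)^2$ uniformly over $x\in D_{\delta_\e}$, which is the content of the displayed bound. The plan is simply to re-read the proof of Lemma~\ref{cor: non-exit}, now recording the position of $V^\e$ at the right endpoint $T_1$ instead of stopping at the non-exit property on $[0,T_1]$.

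First I would copy the decomposition used there. By independence of $T_1\sim\mathrm{EXP}(\beta_\e)$ and $V^\e$ one splits according to whether $T_1\lqq(\beta_\e\delta_\e)^{-1}$ or not; the tail part contributes the exact term $\PP(T_1>(\beta_\e\delta_\e)^{-1})=e^{-\delta_\e^{-1}}$. On $\{T_1\lqq(\beta_\e\delta_\e)^{-1}\}$ I would cut $[0,T_1]$ into at most $\lceil(\beta_\e\delta_\e\sS)^{-1}\rceil$ consecutive blocks of length $\sS$ and run the same recursive Markov argument: on the good event $\eE_x$ of each block, which by Corollary~\ref{cor: V close to u} fails with probability at most $Ce^{-\delta_\e^{-1}}$, the process $V^\e$ stays within $\delta_\e$ of the deterministic flow restarted at the block's left endpoint, and by Remark~\ref{rem D.2} that restarted flow, after a full block of length $\sS$ and starting inside $D$, lies in $\iI$; hence each restart point $V^\e_{k\sS,x}$ lies in $\bB_{\delta_\e}(\iI)\subset D_{2\delta_\e}$, exactly as in the proof of Lemma~\ref{cor: non-exit}. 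Summing the per-block failure probabilities and the recursive error over the at most $\lceil(\beta_\e\delta_\e\sS)^{-1}\rceil$ blocks reproduces the bound $Ce^{-\delta_\e^{-1}}/(\beta_\e\delta_\e)^2$ (up to renaming the constant $C$ to absorb the fixed factor $\sS^{-2}$).

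What is new is locating $V^\e_{T_1,x}$ in the block $((k-1)\sS,k\sS]$ that contains $T_1$. For $k\gqq2$ the Markov property at $(k-1)\sS$ restarts $V^\e$ from $V^\e_{(k-1)\sS,x}\in\bB_{\delta_\e}(\iI)$; choosing $z\in\iI$ with $\|V^\e_{(k-1)\sS,x}-z\|<\delta_\e$, Gronwall's lemma with the global Lipschitz constant $L$ of $f$ on $D$ gives $\|u(t;V^\e_{(k-1)\sS,x})-u(t;z)\|\lqq e^{Lt}\delta_\e$ on $[0,\sS]$, while $u(t;z)\in\iI$ by positive invariance, so that on the good part of that block $V^\e_{T_1,x}\in\bB_{(1+e^{L\sS})\delta_\e}(\iI)\subset\bB_\kappa(\iI)$ as soon as $\e$ is small enough that $(1+e^{L\sS})\delta_\e<\kappa$. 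The case $k=1$, i.e. $T_1\lqq\sS$, is the delicate one and I expect it to be the main obstacle: there $V^\e_{T_1,x}$ is only $\delta_\e$-close to $u(T_1;x)\in D_{\delta_\e}$, which need not lie near $\iI$ before the relaxation time, so this short initial segment must be handled separately --- either by choosing the absorbing set $\iI$ in Remark~\ref{rem D.2} large enough (for instance $\iI=D_{\delta_0}$) that $\bB_\kappa(\iI)\supset D_{\delta_\e}$ in the relevant range of $\kappa$, or by observing that, once $\kappa$ is kept bounded away from $0$, this segment is subsumed in the non-exit budget already paid in Lemma~\ref{cor: non-exit}. Apart from this bookkeeping the argument is a verbatim transcription of that proof.
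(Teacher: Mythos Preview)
Your reading is essentially the paper's own: the authors give no separate argument for the corollary, stating only that ``the proof further yields directly'' the conclusion from the proof of Lemma~\ref{cor: non-exit}. Your reconstruction --- split off $\{T_1>(\beta_\e\delta_\e)^{-1}\}$, cut $[0,T_1]$ into blocks of length $\sS$, control each block via Corollary~\ref{cor: V close to u} and the Markov property, and read off the position at $T_1$ instead of merely the non-exit property --- is exactly that. You also correctly decode the intended inequality as a bound on $\PP(V^\e_{T_1,x}\notin\bB_\kappa(\iI))$; the displayed ``$\in$'' is a typo, as the surrounding text and the later application in Proposition~\ref{prp: the upper bound} (where one needs $\sup_{x\in D_{\delta_\e}}\PP(V_{T_1,x}\notin D_{\delta_\e})$ small) make clear.

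Your concern about the first block $k=1$ is a genuine point the paper does not address. If $T_1\lqq\sS$ one only gets $V^\e_{T_1,x}\in\bB_{\delta_\e}(u(T_1;x))\subset D$, which for small $\kappa$ need not lie in $\bB_\kappa(\iI)$; and since $\PP(T_1\lqq\sS)\sim\beta_\e\sS$ is only polynomially small in $\e$, it cannot be absorbed into the stated exponential bound. The paper's downstream use of the corollary, however, only requires the case where $\bB_\kappa(\iI)\supset D_{\delta_\e}$ (any $\kappa$ strictly less than $\dist(\partial\iI,\partial D)$ suffices for $\e$ small), for which the $k=1$ block is harmless because $u(T_1;x)\in D_{\delta_\e}$ by positive invariance (Remark~\ref{lrm: properties of reduced domains}). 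So your diagnosis is correct: the statement as written for \emph{all} $\kappa>0$ overreaches, and your first proposed fix --- restricting to $\kappa$ with $D_{\delta_\e}\subset\bB_\kappa(\iI)$, equivalently enlarging $\iI$ --- is exactly what is needed and what the applications actually use.
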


\noindent We can now state and prove the main result of this section concerning the behavior of $X^\e_{t\in [0, T_1]}$. 

\begin{prp}[Ergodicity including the first large jump] \label{prop: ergodicity} 
Let the functions $\rho_\e, \delta_\e, \beta_\e$ satisfy (\ref{eq: rho-delta limit}) for $p=4$ 
and 
\begin{equation}\label{eq: exponentiell delta beta}
\lim_{\e\ra 0+}\frac{e^{-\delta_\e^{-1}}}{(\beta_\e\delta_\e)^2} = 0. 
\end{equation}
Consider a set $U\in \mathfrak{B}(\RR^d)$ such that 
\begin{equation}\label{eq: boundary zero}
\lim_{t\ra \infty} \sup_{x\in D} \frac{1}{t} \int_0^t \mu(E^{\partial U}(u(s;x))) ds =0. 
\end{equation}
Further, we consider a family $U^\e \in \mathfrak{B}(\RR^d)$ 
such that for all $\kappa >0$ there exists $\e_0 \in (0,1)$ satisfying for $\e\in (0, \e_0]$ 
that 
$U^\e \bigtriangleup U \subset \bB_{\kappa}(\partial U)$. 
Then 
\begin{equation}
\lim_{\e \ra 0+} \sup_{x\in D_{\delta_\e}} \big|\EE\left[e^{-T_1 \la_\e} 
\ind\{V^\e_{T_1,x} + G(V^\e_{T_1, x}, \e W) \in U^\e \}\right] 
- \int_\aA  \PP(v + G(v, \e W)\in U) P(dv) \big| = 0. 
\end{equation}
\end{prp}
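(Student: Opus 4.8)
The plan is to decompose the expectation according to whether the small-jumps solution $V^\e$ stays close to the deterministic trajectory $u(\cdot;x)$ up to the first large jump time $T_1$, and then to exploit ergodicity of $u$ together with regular variation of $\nu$. First I would introduce, for a slowly growing cutoff time $\tT_\e\nearrow\infty$ with $\beta_\e\tT_\e\to 0$ and $e^{-\delta_\e^{-1}}/(\beta_\e\delta_\e)^2\to 0$, the good event $\eE_x(\e)=\{\sup_{t\in[0,\tT_\e]}\|V^\e_{t,x}-u(t;x)\|\le\delta_\e\}\cap\{T_1\le\tT_\e\}$. On the complement, either $T_1>\tT_\e$ (probability $e^{-\beta_\e\tT_\e}\to 1$? no — here one wants $\PP(T_1>\tT_\e)$ small, so choose $\beta_\e\tT_\e\to\infty$ while keeping $\beta_\e/\delta_\e$-type balances) or the localization fails, which by Corollary~\ref{cor: V close to u} iterated over time windows of length $\sS$ (exactly as in Lemma~\ref{cor: non-exit}) costs at most $Ce^{-\delta_\e^{-1}}/(\beta_\e\delta_\e)^2\to 0$ by \eqref{eq: exponentiell delta beta}. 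So up to a vanishing error the expectation equals $\EE[e^{-T_1\la_\e}\ind\{V^\e_{T_1,x}+G(V^\e_{T_1,x},\e W)\in U^\e\};\eE_x(\e)]$.

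Next I would condition on $T_1$ and on $W\sim\nu_\e$, both independent of $V^\e$. On $\eE_x(\e)$ we have $\|V^\e_{T_1,x}-u(T_1;x)\|\le\delta_\e$, and by continuity of $G$ in its first argument (Hypothesis (S.2), part~1, giving local Lipschitz control of $G(\cdot,w)$ uniformly in $w$ via part~3) the event $\{V^\e_{T_1,x}+G(V^\e_{T_1,x},\e W)\in U^\e\}$ differs from $\{u(T_1;x)+G(u(T_1;x),\e W)\in U\}$ only when $\e W$ lands $W$ in a thin shell around $E^{\partial U}(u(T_1;x))$ of width $O(\delta_\e)$ — the $U^\e\bigtriangleup U\subset\bB_\kappa(\partial U)$ hypothesis absorbs the discrepancy between $U$ and $U^\e$. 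Taking expectations over $W\sim\nu_\e$ and using $\nu_\e(\cdot)=\nu(\cdot\cap\bB_{\rho^\e}^c)/\beta_\e$ together with the regular variation limit \eqref{eq: regular variation}, the conditional probability converges to $\mu(E^{U}(u(T_1;x)))/Q(D^c)$-type quantities; more precisely $\EE[\ind\{u(s;x)+G(u(s;x),\e W)\in U\}]=\nu(E^U(u(s;x))/\e)/\beta_\e\to \mu(E^U(u(s;x)))/(\text{norm})$, with the boundary term controlled by \eqref{eq: boundary zero}. The analogous identity holds for the target $\int_\aA\PP(v+G(v,\e W)\in U)P(dv)$.

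Then I would handle the $e^{-T_1\la_\e}$ factor together with the time-average. Since $T_1\sim\mathrm{EXP}(\beta_\e)$ and $\la_\e/\beta_\e\to Q(D^c)\in(0,\infty)$, writing the expectation as $\int_0^\infty\beta_\e e^{-\beta_\e s}e^{-s\la_\e}\,\Psi_\e(s,x)\,ds$ with $\Psi_\e(s,x)=\PP(u(s;x)+G(u(s;x),\e W)\in U)$ (modulo the vanishing errors above), I would note $\beta_\e e^{-(\beta_\e+\la_\e)s}$ is the density of $\mathrm{EXP}(\beta_\e+\la_\e)$ scaled by $\beta_\e/(\beta_\e+\la_\e)\to 1/(1+Q(D^c))$ — wait, here the claim has no such prefactor, so in fact one uses that $\la_\e\to 0$ and $\beta_\e\to 0$ with $T_1$ of order $1/\beta_\e\to\infty$: the key is that $e^{-T_1\la_\e}$ stays bounded by $1$ and, crucially, $\Psi_\e(s,x)$ averaged against the slowly-decaying exponential weight converges, by Hypothesis (D.1) applied to the bounded function $\phi_\e(v):=\PP(v+G(v,\e W)\in U)$ and a change of variables $s=t/\beta_\e$, to $\int_\aA\phi_\e(v)P(dv)$ uniformly in $x\in D$. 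One must interchange this time-average limit with the $\e\to 0$ limit in $\phi_\e$; since $\phi_\e$ is uniformly bounded and the ergodic convergence in (D.1) is uniform over $x\in D$, a straightforward $\e/2$ argument gives the interchange.

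\textbf{The main obstacle} I anticipate is the uniformity over $x\in D_{\delta_\e}$ combined with the double limit (time-average versus $\e\to 0$): Hypothesis (D.1) gives uniformity in $x$ for each fixed bounded $\phi$, but here the relevant functional $\phi_\e$ itself depends on $\e$ and the averaging horizon $T_1\sim 1/\beta_\e$ also depends on $\e$, so one cannot literally quote (D.1). The fix is to bound $\phi_\e$ uniformly, show $\phi_\e\to\phi_0$ in an appropriate weak sense controlled by $\mu(E^{\partial U}(\cdot))$ via \eqref{eq: boundary zero}, and apply (D.1) at a fixed large horizon $M$ first, then let $\e\to 0$ so that $1/\beta_\e\gg M$; the exponential weight $e^{-\beta_\e s}\approx 1$ on $[0,M]$ and contributes negligibly beyond. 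Tracking that all three error sources (non-exit, localization shell, averaging tail) are $o(1)$ uniformly in $x\in D_{\delta_\e}$ is the technical heart of the argument, but each piece is supplied by the lemmas already proved.
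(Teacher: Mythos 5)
The skeleton of your plan — decompose by a good localization event, replace $V^\e$ by the deterministic flow, invoke regular variation for the conditional jump law and (D.1) for the time average — is the right one, but the way you control the localization over $[0,T_1]$ has a genuine gap. The event $\eE_x(\e)=\{\sup_{t\in[0,\tT_\e]}\|V^\e_{t,x}-u(t;x)\|\le\delta_\e\}$ with $\tT_\e\to\infty$ is \emph{not} a high-probability event, and neither Corollary \ref{cor: V close to u} (which is for a fixed horizon $T$) nor its iteration over windows of length $\sS$ as in Lemma \ref{cor: non-exit} yields it. What that iteration gives is non-exit from $D$, not shadowing of the specific trajectory $u(\cdot;x)$: after the $k$-th window, $V^\e_{k\sS,x}$ is $\delta_\e$-close to $u(\sS;V^\e_{(k-1)\sS,x})$, not to $u(k\sS;x)$, and unless $\aA$ is a single point (for instance for a limit cycle, where the phase drifts) there is no re-synchronization with $u(\cdot;x)$. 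So the central replacement $\ind\{V^\e_{T_1,x}+G(V^\e_{T_1,x},\e W)\in U^\e\}\approx\ind\{u(T_1;x)+G(u(T_1;x),\e W)\in U\}$ fails for typical $T_1\sim 1/\beta_\e\to\infty$. Your closing remark compounds this: $\int_0^M\beta_\e e^{-\beta_\e s}\,ds=1-e^{-\beta_\e M}\to 0$, so the interval $[0,M]$ carries asymptotically \emph{zero} mass of $T_1$; far from "contributing negligibly beyond", essentially the whole contribution comes from $s\gg M$, exactly where your pathwise localization is unavailable.

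The paper's proof avoids this by never localizing beyond a single fixed window. Writing the expectation as $\int_0^\infty\beta_\e e^{-(\beta_\e+\la_\e)s}\,\PP\big(V^\e_{s,x}+G(V^\e_{s,x},\e W)\in U^\e\big)\,ds$ and tiling time by windows $[k\tT,(k+1)\tT]$ with $\tT=\tT_\theta$ fixed and chosen so that (D.1) is $\theta$-accurate at horizon $\tT$, one conditions at each boundary $k\tT$ on $\fF_{k\tT}$: the Markov property restarts the process from the random point $V^\e_{k\tT,x}$, which lies in $\bB_{\delta_\e}(\iI)$ with probability $1-O(e^{-\delta_\e^{-1}})$ by Corollary \ref{cor: V close to u}. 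On each window, Corollary \ref{cor: V close to u} (with the \emph{fixed} $T=\tT$) localizes $V^\e$ to the deterministic trajectory started at that random point; then (D.1), being uniform in the starting point over $D$, makes each window's time average of $\phi_\e(v):=\PP(v+G(v,\e W)\in U)$ converge to the same limit $\int_\aA\phi_\e\,dP$, with boundary sets controlled by \eqref{eq: boundary zero} and the $U^\e\bigtriangleup U\subset\bB_\kappa(\partial U)$ hypothesis exactly as you propose. Summing the geometric series over $k$ recovers the claim (and the prefactor $\beta_\e/(\beta_\e+\la_\e)\to 1$ since $\la_\e/\beta_\e\to 0$). It is this repeated Markov re-initialization at every window boundary — rather than a single long-time localization — that lets the argument handle a non-trivial attractor with a genuinely ergodic invariant measure, which is precisely the generalization the paper is after.
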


\begin{proof}
Let $\theta\in (0,1)$. 
Then by Hypothesis (D.2) there is 
$\tT = \tT_\theta>0$ such that for all $t\gqq \tT$
\begin{equation}\label{eq: limit measure approximation}
\sup_{x\in D}|\frac{1}{\tT} \int_0^{\tT}  \phi(u(s;x)) ds - \int_\aA \phi(v) P(dv)|\lqq \frac{\theta}{2}.
\end{equation}
In addition, we choose $\tT>\sS$. Furthermore there exists $\kappa>0$ such that 
\[
\sup_{x\in D} \frac{1}{\tT} \int_0^\tT 
\mu(E^{\bB_{\kappa}(\partial U)}(u(s;x))) ds \lqq \frac{\theta}{2}. 
\]
Once again we lighten notation $V = V^\e$. 
Due to the independence of $T_1$ and $V$ we may continue for $x\in D_{\delta_\e}$
\begin{align*}
& \EE[e^{-T_1 \la_\e} \ind\{V_{T_1, x}+ G(V_{T_1, x}, \e W) \in U^\e\}]\\[2mm]
&\quad \lqq \EE[e^{-T_1 \la_\e} \ind\{V_{T_1, x}+ G(V_{T_1, x}, \e W) \in U^\e\}] \\
&\quad \lqq \sum_{k= 0}^{\infty} \EE\int_{k \tT}^{(k+1) \tT} \beta_\e e^{-(\beta_\e + \la_\e) s} 
\ind\{V_{s, x}+ G(V_{s, x}, \e W) \in U^\e\} ds]. 
\end{align*}
We define 
\[
\eE_x(\e) = \{\sup_{t\in [0, \tT]} \|V_{t,x} -u(t;x)\| \lqq \delta_\e\} 
\]
and calculate
\begin{align}
& \EE[\int_{k \tT}^{(k+1) \tT} \beta_\e e^{-(\beta_\e + \la_\e) s} 
\ind\{ V_{s, x}+ G(V_{s, x}, \e W) \in U^\e\} ds] \nonumber\\
&\lqq \beta_\e e^{-(\beta_\e + \la_\e) k\tT}  \EE[\int_{k \tT}^{(k+1) \tT}  
\ind\{V_{s, x}+ G(V_{s, x}, \e W) \in U^\e\} ds] \nonumber\\
& \lqq  \tT\beta_\e e^{-(\beta_\e + \la_\e) k\tT} ( \EE[\frac{1}{\tT}\int_{k \tT}^{(k+1) \tT}  
\ind\{V_{s, x}+ G(V_{s, x}, \e W) \in U^\e\} \ind(\eE_x)ds] + \sup_{x\in D_{\delta_\e}}\PP(\eE_x^c))\nonumber\\
& = \tT\beta_\e e^{-(\beta_\e + \la_\e) k\tT} 
( \EE[\EE[\frac{1}{\tT}\int_{k \tT}^{(k+1) \tT}  
\ind\{V_{s, x}+ G(V_{s, x}, \e W) \in U^\e\} \ind(\eE_x)ds~|~\fF_{\tT}]] 
+ \sup_{x\in D_{\delta_\e}}\PP(\eE_x^c))\nonumber\\
& \lqq \tT\beta_\e e^{-(\beta_\e + \la_\e) k\tT} 
(\sup_{y\in \bB_{\delta_\e}(\iI)} \EE[\frac{1}{\tT}\int_{(k-1) \tT}^{k \tT}  
\ind\{V_{s, x}+ G(V_{s, x}, \e W) \in U^\e\} ]+ \sup_{x\in D_{\delta_\e}}\PP(\eE_x^c)).\label{eq: Markov property}
\end{align}
A recursive argument yields 
\begin{align*}
& \EE[\int_{k \tT}^{(k+1) \tT} \beta_\e e^{-(\beta_\e + \la_\e) s} 
\ind\{ V_{s, x}+ G(V_{s, x}, \e W) \in U^\e\} ds] \\
&\lqq \tT\beta_\e e^{-(\beta_\e + \la_\e) k\tT}  
(\sup_{y\in D_{\delta_\e}} \frac{1}{\tT} \int_{0}^{\tT} 
\PP(V_{s, y} + G(V_{s, y} , \e W) \in U^\e) ds \\
&\qquad + (k+1)\sup_{y\in D_{\delta_\e}} \PP(\eE_y^c)) = J.
\end{align*}
We choose $\e_0\in (0,1)$ small enough such that $\e\in (0, \e_0]$ implies 
\[
(U^\e \bigtriangleup U) + \bB_{(1+Le^{L^2}) \delta_\e}(0) \subset \bB_{\kappa}(\partial U).  
\]
Hence we may continue 
\begin{align*}
J &\lqq \tT\beta_\e e^{-(\beta_\e + \la_\e) k\tT}  
(\sup_{y\in D_{\delta_\e}} (\frac{1}{\tT} \int_{0}^{\tT} 
\PP(u(s;y) + G(u(s;y) , \e W) \in U^\e + \bB_{(1+ L e^{L^2})\delta_\e}) ds \\
&\qquad + (k+1) \exp(-\delta_\e^{-1} + 3+ \ln(d))). 
\end{align*}
The first summand in the brackets satisfies due to the regular variation of $\nu$, 
the measure continuity and conditions (\ref{eq: boundary zero}) and (\ref{eq: limit measure approximation})
\begin{align*}
& \frac{\beta_\e}{h_\e} \frac{1}{\tT} \int_{0}^\tT \PP(u(s;x) + G(u(s;x), \e W) 
\in \bB_{\kappa}(U^\e)) 
\bigtriangleup U ) ds \\
& \quad \lqq   \frac{1}{\tT} \int_{0}^\tT \frac{1}{h_\e} 
\nu\Big(\frac{1}{\e}E^{\bB_{\kappa}(\partial U)}(u(s;x))\Big) ds \\
& \quad \lqq   (1+\theta) \frac{1}{\tT} \int_{0}^\tT \mu\Big(E^{\bB_{\kappa}(\partial U)}(u(s;x)\Big) ds 
\lqq (1+\theta)\frac{\theta}{2}.
\end{align*}
Hence 
\begin{align*}
& \sup_{y\in D_{\delta_\e}} (\frac{1}{\tT} \int_{0}^{\tT} 
\PP(u(s;y) + G(u(s;y) , \e W) \in \bB_{\kappa}(U^\e)) ds\\
&\quad \lqq (1+\theta) \frac{1}{\tT} \int_{0}^\tT \mu\Big(E^U(u(s;x))\Big) ds 
+ (1+\theta) \frac{\theta}{2} \frac{h_\e}{\beta_\e}
\end{align*}
We eventually obtain 
\begin{align*}
\frac{1}{\tT} \int_{0}^\tT \mu\Big(E^U(u(s;x))\Big) ds
\quad \lqq (1+\theta) \int_\aA \mu\Big(E^U(v)\big) P(dv). 
\end{align*}
Summing up over $k$ we end up with an $\e_0 \in (0,1)$ such that for $\e \in (0, \e_0]$ 
\begin{align*}
J &\lqq (1+\theta)^2 \frac{\tT~ \beta_\e}{1-e^{-\beta_\e \tT}} 
 \Big(\int_\aA \mu\Big(E^U(v)\big) P(dv)
+ (1+\theta) \frac{\theta}{2} \frac{h_\e}{\beta_\e} \Big)
+ \frac{\beta_\e}{(1-e^{-\beta_\e \tT})^2} \exp(-\delta_\e^{-1} + 3+ \ln(d))\\
&\lqq (1+\theta)^3  \left(\int_{\aA} \mu(E^U(v)) P(dv) 
+ \frac{\theta}{2}\right)
\end{align*}
This closes the proof. 
\end{proof}

\section{Proof of the Theorem \ref{thm: first exit times}} 
In this section we exploit the results on $(X^\e_{t,x})_{t\in [0, T_1]}$ and the strong Markov property 
to pass from $[0, T_1]$ to $[T_{k-1}, T_k]$ in order to determine the first exit scenario of $(X^\e_{t,x})_{t\gqq 0}$. 
The main step consists in the upper bound of the Laplace transform. 

\subsection{The upper bound}

\begin{prp}\label{prp: the upper bound}
Assume Hypotheses \textbf{(D.1)} and \textbf{(S.1-2)} to be satisfied. 
We choose $\delta_\e = \e^{\gamma}$ for $\gamma>0$ and $\rho^\e = \e^{-\rho}$ for $\rho\in (0,1)$ 
such that conditions (\ref{eq: rho-delta limit}) for $p=4$ and (\ref{eq: exponentiell delta beta}) 
are satisfied. Furthermore we assume that   
\begin{equation}\label{eq: asymptotics symmetric difference}
\int_{\aA} \mu(E^{\partial D}(y)) P(dy) = 0 \qquad \mbox{ and } \qquad \int_{\aA} \mu(E^{D^c}(y)) P(dy) >0.
\end{equation}
Then for all $\theta>0$ and $U\in \mathfrak{B}(\RR^d)$ such that 
\begin{equation}\label{eq: Rand U null}
\int_\aA \mu(E^{\partial U}(y)) P(dy) = 0 
\end{equation}
and $C\in (0,1)$ there is $\e_0\in (0,1)$ such that for $\e\in (0,\e_0]$ 
the first exit time $\TT_y = \TT_y(\e)$ satisfies 
\begin{align*}
\sup_{y\in D_{\delta_\e}} \EE\left[ e^{-\theta Q(D^c) h_\e \TT_y}  
\ind\{X^\e_{\TT_y ,y}\in U\}\right] 
\lqq (1+C) \frac{1}{1+ \theta} \frac{Q(U \cap D^c)}{Q(D^c)}. 
\end{align*}
\end{prp}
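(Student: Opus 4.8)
The proof of Proposition~\ref{prp: the upper bound} rests on decomposing the trajectory into the excursions between consecutive large jump times $T_{k-1}$ and $T_k$, and on iterating the one-step estimate of Proposition~\ref{prop: ergodicity} via the strong Markov property. First I would introduce the escape events on the $k$-th inter-jump interval and the key observation that, up to an exponentially small error controlled by Lemma~\ref{cor: non-exit}, the process $X^\e$ does not leave $D$ during $[T_{k-1},T_k)$; the exit, when it happens, occurs \emph{exactly} at some jump time $T_k$, triggered by the increment $G(X^\e_{T_k-,y},\e W_k)$. Writing the Laplace functional $\EE[e^{-\theta Q(D^c) h_\e \TT_y}\ind\{X^\e_{\TT_y,y}\in U\}]$ as a sum over $k\gqq 1$ of the contributions ``the first $k-1$ jumps keep the process inside $D$, and the $k$-th jump throws it into $U\cap D^c$'', I reduce matters to estimating each summand.

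For a single summand, condition on $\fF_{T_{k-1}}$ and apply the strong Markov property at $T_{k-1}$: on the survival event the process restarts from a point $X^\e_{T_{k-1},y}\in D_{\delta_\e}$ (here one uses Corollary~\ref{cor: at jump time in D delta} to know the restart point is well inside $D$, i.e.\ near $\iI\subset D_{2\delta_\e}$), and Proposition~\ref{prop: ergodicity}, applied with $U$ replaced alternately by $D^c$ and by $U\cap D^c$, gives that the probability of surviving one more inter-jump interval and then exiting into $U$ via the next large jump is, asymptotically, $h_\e^{-1}\la_\e$-close to $\int_\aA \PP(v+G(v,\e W)\in U\cap D^c)P(dv)$, i.e.\ to $Q(U\cap D^c)$ after normalising, while the survival-without-exit probability per step tends to $1-Q(D^c)h_\e/\la_\e\cdot(\cdots)$. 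Combined with $\EE[e^{-\theta Q(D^c)h_\e T_k}]=(\beta_\e/(\beta_\e+\theta Q(D^c)h_\e))^k$ and the limit $\la_\e/h_\e\to Q(D^c)$ from the Remark preceding Theorem~\ref{thm: first exit times}, summing the geometric series over $k$ produces
\[
\sum_{k\gqq 1}\Big(\frac{\beta_\e}{\beta_\e+\theta Q(D^c)h_\e}\Big)^{k-1}\frac{\beta_\e}{\beta_\e+\theta Q(D^c)h_\e}\cdot\frac{Q(U\cap D^c)h_\e}{\beta_\e}\,\big(1+o(1)\big)
\;\longrightarrow\;\frac{1}{1+\theta}\frac{Q(U\cap D^c)}{Q(D^c)} .
\]
The hypotheses $Q(\partial D)=0$ and $Q(\partial U)=0$ (equivalently \eqref{eq: asymptotics symmetric difference} and \eqref{eq: Rand U null}) are exactly what is needed so that Proposition~\ref{prop: ergodicity}'s boundary condition \eqref{eq: boundary zero} holds for the relevant sets $D$, $D^c$, $U\cap D^c$, and so that passing between $U^\e$-type and $U$-type sets costs nothing in the limit.

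The main obstacle is the uniform bookkeeping of the error terms across infinitely many summands. Each application of Proposition~\ref{prop: ergodicity} carries a multiplicative $(1+\theta')$-type slack and an additive exponentially small term of order $e^{-\delta_\e^{-1}}/(\beta_\e\delta_\e)^2$; one has to check that after summation over $k$ (weighted by the geometric factor, whose ratio is $1-O(\beta_\e^{-1}h_\e)$ away from $1$, hence summable but with a large number $\sim \beta_\e/h_\e$ of effective terms) these errors still vanish, which is precisely why $\gamma$, $\rho$ must be chosen so that \eqref{eq: rho-delta limit} with $p=4$ and \eqref{eq: exponentiell delta beta} hold, and why the restriction $\gamma\in(0,\tfrac15)$ appears. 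A secondary subtlety is ensuring the restart point after each surviving jump lies in $D_{\delta_\e}$ (not merely in $D$), so that Proposition~\ref{prop: ergodicity} can be re-applied with the same $\delta_\e$; this is handled by the localization Corollaries~\ref{cor: V close to u} and~\ref{cor: at jump time in D delta}, which place the process within $\bB_{\delta_\e}(\iI)\subset D_{2\delta_\e}$ at the jump times with overwhelming probability. Finally one absorbs the $(1+\theta')$ slacks by choosing $\theta'$ small depending on the target $C$, which yields the claimed bound with $(1+C)$ in front.
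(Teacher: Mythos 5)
Your plan matches the paper's proof closely: the paper also decomposes $\{\TT_x<\infty\}$ according to whether the exit happens at a large jump time $T_k$, strictly between jumps (handled by Lemma~\ref{cor: non-exit}), or with an index $k\gqq\kappa_\e:=\lceil 1/h_\e\rceil$ (a tail cut-off you only allude to via ``$\sim\beta_\e/h_\e$ effective terms''); it then iterates the strong Markov property at the jump times, invokes Proposition~\ref{prop: ergodicity} for the per-step survival and exit-into-$U$ factors, sums the geometric series, and absorbs the multiplicative slacks and additive exponentially small remainders exactly as you describe. The only cosmetic mismatch is that the restriction $\gamma\in(0,\tfrac15)$ is imposed at the level of Theorem~\ref{thm: first exit times} rather than inside this proposition, and the restart point lands in $D_{2\delta_\e}$ (via Corollary~\ref{cor: at jump time in D delta}) rather than $D_{\delta_\e}$; the paper glosses over the same point, so this does not change the substance.
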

 
\begin{proof} We start by lightening the notation. 
Whenever we consider the first jump $i=1$ we omit the index. 
Hence we write $T = T_1 = T_1^\e$, $W = W_1 = W_1^\e$ etc. Define $\tau_i := T_{i}- T_{i-1}$. 
All processes will loose their $\e$ index. For convenience we abreviate $Q = Q(D^c)$. 
We define the following events for $y\in D_{\delta_\e}$ and $s, t\gqq 0$ by 
\begin{align*}
A_{t,s,y} &:= \{X_{r,\cdot}\circ \theta_{s}(y)\in D \mbox{ for all } r\in [0, t]\},\\
B_{t,s,y} &:= \{X_{r,\cdot}\circ \theta_{s}(y)\in D \mbox{ for all } r\in [0, t), 
X_{t,\cdot} \circ \theta_{s}(y) \notin D\}\\
O_{t,s,y}(U) &:= \{X_{t, \cdot}\circ \theta_{s}(y) \in U\}. 
\end{align*}
For $x\in D_{\delta_\e}$ and with the convention $T_0 = 0$ we denote the trivial disjoint repartition 
\[
\{\TT_x <\infty\} = \bigcup_{k=1}^\infty \{\TT_x \in (T_{k-1}, T_k)\}\cup\{\TT_x = T_k\}. 
\]
Furthermore consider for $k\gqq 1$ and
\[
\{\TT_x = T_k\} = \bigcap_{i=1}^{k-1} A_{\tau_i, T_{i-1}, X_{T_{i-1}, x}} \cap B_{\tau_k, T_{k-1}, X_{T_{k-1}, x}}
\]
and analoguously
\[
\{\TT_x  \in (T_{k-1}, T_k) \} = \bigcap_{i=1}^{k-1} A_{\tau_i, T_{i-1}, X_{T_{i-1}, x}}
\cap \{V^{k}_{t}\circ \theta_{T_{k-1}}(x) \notin D \mbox{ for some }t \in (0, \tau_k)\}.
\]
Therefore we may calculate 
\begin{align*}
& \ind\{\TT_x  = T_k\} = \prod_{i=1}^{k-1} 
\ind(A_{\tau_i, T_{i-1}, X_{T_{i-1}, x}}) \ind(B_{\tau_k, T_{k-1}, X_{T_{k-1}, x}}), \\
\end{align*}
for $k=1$ 
\begin{align*}
\ind\{\TT_x  \in (0, T_1) \} &= 
\ind(\{V_{t,x} \notin D \mbox{ for some }t \in (0, T_1)\}
\end{align*}
and for $k\gqq 2$ 
\begin{align*}
 \ind\{\TT_x  \in (T_{k-1}, T_k) \} 
= \prod_{i=1}^{k-1} \ind(A_{\tau_i, T_{i-1}, X_{T_{i-1}, x}}) 
\ind(\{V^{k}_{t}\circ \theta_{T_{k-1}}(x) \notin D_{\delta_\e} \mbox{ for some }t \in (0, \tau_k)\}). 
\end{align*}
We choose $\kappa_\e := \lceil \frac{1}{h_\e}\rceil$. Hence 
\begin{align*}
& \sup_{x\in D_{\delta_\e}}\EE \left[e^{-\theta Q h_\e \TT_{x}} \ind\{X_{\TT_x , x} \in U \}\right]\\  
& \lqq \sum_{k=1}^{\kappa_\e-1} \sup_{x\in D_{\delta_\e}}\EE\left[ e^{-\theta Q h_\e \TT_{x}} 
(\ind\{\TT_x  = T_k\}+\ind\{\TT_x  \in (T_{k-1}, T_k) \})\ind(O_{\TT_x, 0, x}(U))\right]\\[2mm]
& + \sum_{k=\kappa_\e}^{\infty} \sup_{x\in D_{\delta_\e}}\EE\left[ e^{-\theta Q h_\e \TT_{x}} 
\ind\{\TT_x  \in (T_{k-1}, T_k] \}\right]\\[2mm]
& =: S_{1} + S_{2} + S_3. 
\end{align*}
First we treat the easiest sum. 
\paragraph{1) Estimate of $S_3$: } Due to $T_k = \tau_1 + \dots + \tau_k$ 
and the independence and stationarity of $(\tau_i)$ we obtain 
\begin{align*}
S_3 &\lqq \sum_{k=\kappa_\e}^\infty \EE[e^{-\theta Q h_\e T_1}]^k
 = \sum_{k=\kappa_\e}^\infty \frac{1}{(1+ \frac{\theta Q h_\e}{\beta_\e})^k} 
 = \sum_{k=\kappa_\e}^\infty e^{k \ln(1-\frac{\theta Q h_\e}{\beta_\e})}. 
\end{align*}
There is $\e_0 \in (0,1)$ such that $\e\in (0,\e_0]$ 
\begin{align*}
S_3 &\lqq \sum_{k=\kappa_\e}^\infty e^{-k 2\frac{\theta Q h_\e}{\beta_\e}} = 
\frac{e^{-\kappa_\e 2\frac{\theta Q h_\e}{\beta_\e}}}{1-e^{-2\frac{\theta Q h_\e}{\beta_\e}}} 
\lqq \frac{2 e^{-\kappa_\e 2\frac{\theta Q h_\e}{\beta_\e}}}{2\frac{\theta Q h_\e}{\beta_\e}} \lqq \frac{C}{3}.
\end{align*}
\paragraph{2) Estimate of $S_1$:} We continue 
\begin{align*}
S_{1} & \lqq \sum_{k=1}^{\kappa_\e} \sup_{x\in D_{\delta_\e}}
\EE\left[ e^{-\theta Q h_\e T_k}\ind\{\TT_x  = T_k\}\ind(O_{T_k, 0,x}(U))\right]\\ 
& \lqq \sum_{k=1}^{\kappa_\e} \sup_{x\in D_{\delta_\e}}\EE\left[ \prod_{i=1}^{k-1}e^{-\theta Q h_\e \tau_i} 
\ind(A_{\tau_i, T_{i-1}, X_{T_{i-1}, x}}) \ind(B_{\tau_k, T_{k-1}, X_{T_{k-1}, x}})\ind(O_{\tau_{k}, T_{k-1},X_{T_{k-1}, x}}(U))\right].\\
\end{align*}
Exploiting  the same reasoning as in inequality (\ref{eq: Markov property}) 
with the strong Markov property of $X^\e$ for the jump times $(T_k)_{k\gqq 1}$ 
instead of Markov property at deterministic times $k\tT$, 
and the independence and stationarity of the increments 
we estimate the $k$-th summand of $S_{1}$ by 
\begin{align}
&\sup_{x\in D_{\delta_\e}} \EE\bigg[ \prod_{i=1}^{k-1}e^{-\theta Q h_\e \tau_i} 
\ind(A_{\tau_i, T_{i-1}, X_{T_{i-1}, x}}) 
e^{-\theta Q h_\e \tau_{k-1}} \ind(B_{\tau_k, T_{k-1}, X_{T_{k-1}, x}}) 
\ind(O_{\tau_{k}, T_{k-1},X_{T_{k-1}, x}}(U))\bigg]\nonumber\\
& \lqq \sup_{x\in D_{\delta_\e}} \EE\bigg[\bigg(e^{-\theta Q h_\e \tau_1} 
\ind(A_{\tau_1, 0, x}) \ind\{V_{T_1,x} \in D_{\delta_\e}\} 
+ \ind\{V_{T_1,x} \notin D_{\delta_\e}\}\bigg)\nonumber\\
& \qquad \EE\bigg[\prod_{i=2}^{k-1}e^{-\theta Q h_\e \tau_i} 
\ind(A_{\tau_i, T_{i-1}, X_{T_{i-1}, x}})
e^{-\theta Q h_\e \tau_{k-1}} \ind(B_{\tau_k, T_{k-1}, X_{T_{k-1}, x}})
\ind(O_{\tau_{k}, T_{k-1},X_{T_{k-1}, x}}(U))~|~\fF_{T_{1}}\bigg] \bigg] \nonumber\\
& \lqq  \sup_{x\in D_{\delta_\e}} \EE\bigg[e^{-\theta Q h_\e T} \ind(A_{x})\bigg]\nonumber \\
&\qquad \sup_{x\in D_{\delta_\e}} \EE\bigg[ \prod_{i=1}^{k-2}e^{-\theta Q h_\e \tau_i} 
\ind(A_{\tau_i, T_{i-1}, X_{T_{i-1}, x}}) 
e^{-\theta Q h_\e \tau_{k-2}} 
\ind(B_{\tau_{k-1}, T_{k-2}, X_{T_{k-2}, x}})
\ind(O_{\tau_{k-1}, T_{k-2},X_{T_{k-2}, x}}(U)) \bigg]\nonumber\\
&\qquad +  \sup_{x\in D_{\delta_\e}} \PP(V_{T_1,x}\notin D_{\delta_\e}) \label{eq: Strong Markov estimate}
\end{align}
where we use the abreviation
\begin{align*}
A_x &= A_{T_1, 0, x} \\
B_x &= B_{T_1, 0, x} \\
O_x^{U} &= O_{T_1, 0, x}(U). 
\end{align*}
The recursion from $k-1$ to $1$ leads to 
\begin{align}
&\sup_{x\in D_{\delta_\e}} \EE\bigg[ \prod_{i=1}^{k-1}e^{-\theta Q h_\e \tau_i} 
\ind(A_{\tau_i, T_{i-1}, X_{T_{i-1}, x}}) 
e^{-\theta Q h_\e \tau_{k-1}} 
\ind(B_{\tau_k, T_{k-1}, X_{T_{k-1}, x}}) 
\ind(O_{\tau_{k}, T_{k-1},X_{T_{k-1}, x}}(U))\nonumber \\ 
& \lqq   \bigg(\sup_{x\in D_{\delta_\e}} \EE\bigg[e^{-\theta Q h_\e T} \ind(A_{x})\bigg]\bigg)^{k-1} 
\sup_{x\in D_{\delta_\e}} \EE\bigg[e^{-\theta Q h_\e T} \ind(B_{x})
\ind(O^U_{x})) \bigg]\nonumber\\
&\qquad + \sup_{x\in D_{\delta_\e}} \PP(V_{T_1,x}\notin D_{\delta_\e})
\sum_{j=0}^{k-2} \bigg(\sup_{x\in D_{\delta_\e}} \EE\bigg[e^{-\theta Q h_\e T} \ind(A_{x})\bigg]\bigg)^{j}.
\label{eq: Strong Markov estimate recursion}
\end{align}
In the same way we estimate the $k$-th summand of $S_2$ for $k\gqq 1$. 
\begin{align*}
&\sup_{x\in D_{\delta_\e}} \EE\bigg[ \prod_{i=1}^{k-1}e^{-\theta Q h_\e \tau_i} 
\ind(A_{\tau_i, T_{i-1}, X_{T_{i-1}, x}}) 
e^{-\theta Q h_\e \tau_{k-1}} 
\ind(\{V^{k}_{t}\circ \theta_{T_{k-1}}(x) \in D_{\delta_\e}^c \cap U \mbox{ for some }t \in (0, \tau_k)\})]\\
& \lqq   \bigg(\sup_{x\in D_{\delta_\e}} \EE\bigg[e^{-\theta Q h_\e T} \ind(A_{x})\bigg]\bigg)^{k-1}
\sup_{x\in D_{\delta_\e}} \EE\bigg[e^{-\theta Q h_\e T} \ind(\{V^{\e}_{t, x} \in D \cap U 
\mbox{ for some }t \in (0, T_1)\}) \bigg]\\
&\qquad + \sup_{x\in D_{\delta_\e}} \PP(V_{T_1,x}\notin D_{\delta_\e})
\sum_{j=0}^{k-2} \bigg(\sup_{x\in D_{\delta_\e}} \EE\bigg[e^{-\theta Q h_\e T} \ind(A_{x})\bigg]\bigg)^{j}. \nonumber
 \end{align*}
We show now that the first sum can be estimated by $1/(1+\theta)$, the Laplace transform of $\mbox{EXP}(1)$ evaluated at $\theta$, 
plus a small error and that both additional sums tend to zero if $\e$ does so.

\noindent \textbf{Starting with the first factor of the main sum} we obtain 
\begin{align*}
\sup_{y\in D_{\delta_\e}} \EE\left[ e^{-\theta Q h_\e T}  \bI(A_y) \right] 
&\lqq \sup_{y\in D_{\delta_\e}} ~\EE\left[e^{-\theta Q h_\e T} (1-\bI\{V_{T, y} + G(V_{T, y}, \e W) 
\in D^c \})\right].
\end{align*}
Proposition \ref{prop: ergodicity} and the independence of $W$ from $T$ and $V$ 
ensure the existence $\e_0\in (0,1)$ such that for $\e \in (0, \e_0]$ 
\begin{align*}
\sup_{y\in D_{\delta_\e}} \EE\left[ e^{-\theta Q h_\e T}  \bI(A_y) \right]  &  \lqq \frac{\beta_\e}{\theta Q h_\e + \beta_\e} 
\left(1 - (1-C) \int_\aA  \PP(v + G(v, \e W)\in D^c) P(dv)\right).
\end{align*}
\noindent Since by definition 
\begin{align*}
\PP\Big(v + G(v, \e W)\in D^c\Big) 
= \frac{1}{\beta_\e} \nu\Big(\frac{1}{\e} E^{D^c}(v))\Big),
\end{align*}
$\aA$ is compact and the distance $d(\aA, \pd D) >0$, 
the regular variation of $\nu$ implies the existence of a constant $\e_0\in (0,1)$ 
such that 
\begin{align*}\displaystyle
\sup_{v\in \aA} \bigg|\frac{\PP\Big(v + G( v, \e W)\in D^c\Big)}
{\displaystyle\frac{h_\e}{\beta_\e} \mu\left(E^{D^c}(v)\right) } -1\bigg| \lqq C, 
\quad \mbox{ for all }\e\in (0, \e_0].
\end{align*}
Hence there is $\e_0 \in (0,1)$ such for $\e\in (0, \e_0]$ 
\begin{align}
\sup_{y\in D_{\delta_\e}} \EE\left[ e^{-\theta Q h_\e T}  \bI(A_y) \right]  &~\lqq \frac{\beta_\e}{ \theta Q h_\e + \beta_\e} \left(1 - (1-C)^2\frac{h_\e}{\beta_\e} 
\int_{\aA} \mu\left(E^{D^c}(u)\right) P(du)\right) \nonumber\\
&~=  \frac{\beta_\e}{ \theta Q h_\e + \beta_\e} 
\left(1 - (1-C)^2\frac{Q h_\e}{\beta_\e}\right). \label{eq: estimate A}
\end{align}

\noindent \textbf{The second factor of the main sum} can be treated analogously 
and we obtain for sufficiently small $\e_0\in (0,1)$ that 
\begin{equation}\label{eq: estimate B}
\sup_{y\in D_{\delta_\e}} \EE\left[ e^{-\theta Q h_\e T} \bI(B_y)\ind(O^{U}_y) \right] 
\lqq (1+C)^2 \frac{\beta_\e}{\theta Q h_\e + \beta_\e} \frac{Q(U\cap D^c) h_\e}{\beta_\e} 
\end{equation}
for $\e \in (0, \e_0]$, where 
\[
Q(D^c \cap U) =  \int_\aA \mu\left(E^{D^c \cap U}(u)\right) P(du). 
\]

\paragraph{For the remainder sum} we exploit Corollary \ref{cor: at jump time in D delta} 
which yields a constant $C'>0$ and $\e_0\in (0,1)$ such that for $\e \in (0,\e_0]$ 
\begin{align*}
\sup_{y\in D_{\delta_\e}}\PP(\TT_y  \in (0, T)) \lqq \frac{ C' e^{-\delta_\e^{-1}}}{(\beta_\e\delta_\e)^2}
\end{align*}
and obtain 
\begin{align*}
\sup_{x\in D_{\delta_\e}} \PP(V_{T_1,x}\notin D_{\delta_\e})& \sum_{k=1}^{k_\e-1} 
\sum_{j=0}^{k-2} \bigg(\sup_{x\in D_{\delta_\e}} \EE\bigg[e^{-\theta Q h_\e T} \ind(A_{x})\bigg]\bigg)^{j} \\
& \lqq \frac{ C' e^{-\delta_\e^{-1}}}{(\beta_\e\delta_\e)^2} 
\sum_{k=1}^{\kappa_\e-1} \sum_{j=0}^{k-2}  \left(\frac{\beta_\e}{ \theta Q h_\e + \beta_\e} 
\left(1 - (1-C)^2\frac{Q h_\e}{\beta_\e}\right)\right)^j.
\end{align*}
Let us call 
\[
q_\e = \frac{\beta_\e}{ \theta Q h_\e + \beta_\e} 
\left(1 - (1-C)^2\frac{Q h_\e}{\beta_\e}\right).
\]
Then 
\begin{align}\label{eq: remainder sum}
\sup_{x\in D_{\delta_\e}} \PP(V_{T_1,x}\notin D_{\delta_\e})& \sum_{k=1}^{k_\e-1} 
\sum_{j=0}^{k-2} \bigg(\sup_{x\in D_{\delta_\e}} \EE\bigg[e^{-\theta Q h_\e T} \ind(A_{x})\bigg]\bigg)^{j} \nonumber\\
& =: \frac{ C' e^{-\delta_\e^{-1}}}{(\beta_\e\delta_\e)^2} \sum_{k=0}^{\kappa_\e-2} \frac{1-q_\e^{k-1}}{1-q_\e} \nonumber\\
&\lqq \frac{ C' e^{-\delta_\e^{-1}}}{(\beta_\e\delta_\e)^2} \frac{\kappa_\e}{1-q_\e} \lqq  \frac{C}{3}. 
\end{align}
Eventually inequalities (\ref{eq: estimate A}), (\ref{eq: estimate B}) and (\ref{eq: remainder sum}) combined 
imply the existence of $\e_0\in (0,1)$ such that 
\[
S_1 \lqq (1+C)^2 \frac{\beta_\e}{\theta Q h_\e + \beta_\e} 
\frac{Q(D^c \cap U) h_\e}{\beta_\e} \sum_{k=1}^\infty 
\left( \frac{\beta_\e}{\theta Q h_\e + \beta_\e} \left(1-\frac{Qh_\e}{\beta_\e}\left(1 - C\right)^2\right)\right)^{k-1} + \frac{C}{3}
\]
for $\e \in (0,\e_0]$.

\paragraph{3) Estimate of $S_2$:} For $k=1$ we exploit Corollary \ref{cor: non-exit}, 
which yields a constant $C'>0$ and $\e_0\in (0,1)$ such that for $\e \in (0,\e_0]$ 
\begin{align*}
\sup_{y\in D_{\delta_\e}}\PP(\{\TT_y  \in (0, T)\} \cap O_{\TT_y, 0, x}(U)) 
&\lqq \sup_{x\in D_{\delta_\e}} \PP(t\in [0,T]: ~V_{t,x} \notin D_{\delta_\e} \cap U)\\
&\lqq \frac{ C' e^{-\delta_\e^{-1}}}{(\beta_\e\delta_\e)^2}\\
&\lqq ((1+C)^3-(1+C)^2) \frac{\beta_\e}{\theta Q h_\e + \beta_\e} \frac{Qh_\e}{\beta_\e}.
\end{align*}
The last estimate follows by the algebraic choice of $\delta_\e$. 
and eventually with the help of estimate (\ref{eq: remainder sum}) of the remainder sum
\begin{align*}
S_2 & \lqq ((1+C)^3-(1+C)^2) \frac{\beta_\e}{\theta Q h_\e + \beta_\e} \frac{Q(D^c \cap U) h_\e}{\beta_\e} 
\sum_{k=1}^\infty\left( \frac{\beta_\e}{ \theta Q h_\e + \beta_\e}
 \left(1 - (1-C)^2\frac{Q h_\e}{\beta_\e}\right)\right)^{k-1} + \frac{C}{3}\\
\end{align*}

\paragraph{Conclusion:} We infer that there is a sufficiently small constant $\e_0\in (0,1)$ such that for $\e \in (0, \e_0]$ 

\begin{align*}
\sup_{x\in D_{\delta_\e}} \E&\left[ e^{-\theta Q h_\e \TT_x} \ind\{X^\e_{\TT_y ,y}\in U\}\right] \lqq S_1 + S_2 + S_3\\
&\lqq ~(1+C)^3 \frac{\beta_\e}{\theta Q h_\e + \beta_\e} \frac{Q(D^c \cap U)h_\e}{\beta_\e} 
\sum_{k=1}^\infty \left(\frac{\beta_\e}{\theta Q h_\e + \beta_\e} \left(1-\frac{Qh_\e}{\beta_\e}\left(1 - C \right)^3\right)\right)^{k-1} \\[2mm]
&= ~\frac{\ds (1+C)^3\frac{\beta_\e}{\ds \theta Q h_\e + \beta_\e} 
\frac{Q(D^c \cap U)h_\e}{\beta_\e}}{\ds 1- \left(\frac{\beta_\e}{\theta Q h_\e + \beta_\e} 
\left(1-\frac{Qh_\e}{\beta_\e}\left(1 - C \right)^3\right)\right)} + C\\[3mm]
& = \frac{(1+C)^3}{\theta + (1 - C)^3}\frac{Q(D^c \cap U)}{Q} + C.
\end{align*}
By an appropriate renaming of the constant $C$ we close the proof. 
\end{proof}

\subsection{The lower bound}

\begin{prp}\label{prp: the lower bound}
Let the assumptions of Proposition (\ref{prp: the upper bound}) be satisfied.
Then for all $\theta>0$, $U\in \mathfrak{B}(\RR^d)$ satisfying (\ref{eq: Rand U null})
and $C\in (0,1)$ there is $\e_0\in (0,1)$ such that for $\e\in (0,\e_0]$ 
the first exit time $\TT_y = \TT_y(\e)$ satisfies 
\begin{align*}
\inf_{y\in D_{\delta_\e}} \EE\left[ e^{-\theta Q(D^c) h_\e \TT_y} \ind\{X^\e_{\TT_y,y}\in U\}\right] 
\gqq \frac{Q(D^c \cap U)}{Q(D^c)} \frac{1-C}{1+ \theta +C}. 
\end{align*}
\end{prp}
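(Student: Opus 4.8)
### Plan of proof

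The plan is to mirror the structure of the proof of Proposition~\ref{prp: the upper bound}, but with all inequalities reversed and the error constants arranged so that they shrink the main term rather than inflate it. The quantity to bound below is $\EE[e^{-\theta Q h_\e \TT_y}\ind\{X^\e_{\TT_y,y}\in U\}]$. First I would discard everything except the event $\{\TT_y = T_k\}$, i.e.\ the exit occurring exactly at a large jump time; dropping the events $\{\TT_y\in(T_{k-1},T_k)\}$ can only decrease the expectation, so this is harmless for a lower bound. On $\{\TT_y = T_k\}$ we have the factorization into the events $A_{\tau_i,T_{i-1},X_{T_{i-1}}}$ for $i=1,\dots,k-1$ (survival of $D$ during the $i$-th interjump interval) followed by $B_{\tau_k,T_{k-1},X_{T_{k-1}}}$ (exit at the $k$-th jump) and $O_{\tau_k,T_{k-1},X_{T_{k-1}}}(U)$ (landing in $U$), exactly as in the upper bound.

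The key step is the strong Markov recursion at the jump times $(T_k)$, carried out with lower bounds. Here one must be careful: to get a clean lower bound on a product of survival factors one cannot simply insert $\ind\{V_{T_1}\in D_{\delta_\e}\}$ for free the way the upper bound inserts $\ind\{V_{T_1}\notin D_{\delta_\e}\}$ with a $+$ sign. Instead I would write, for each interjump block, $\EE[e^{-\theta Q h_\e T}\ind(A_y)] \gqq \EE[e^{-\theta Q h_\e T}\ind(A_y)\ind\{V_{T_1,y}\in D_{\delta_\e}\}]$ and then use that on $\{V_{T_1,y}\in D_{\delta_\e}\}$ the post-jump position again lies in the reduced domain where the induction hypothesis applies (together with Corollary~\ref{cor: at jump time in D delta} and Corollary~\ref{cor: non-exit}, which guarantee the post-jump point is near $\iI\subset D_{2\delta_\e}$ with overwhelming probability, the complementary event contributing only $Ce^{-\delta_\e^{-1}}/(\beta_\e\delta_\e)^2 \to 0$). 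This gives, for each factor,
\begin{align*}
\inf_{y\in D_{\delta_\e}}\EE[e^{-\theta Q h_\e T}\ind(A_y)] &\gqq \frac{\beta_\e}{\theta Q h_\e + \beta_\e}\Big(1 - (1+C)^2\frac{Q h_\e}{\beta_\e}\Big) - o(1),
\end{align*}
using Proposition~\ref{prop: ergodicity} (now giving a lower bound, since its conclusion is a genuine limit) and the regular-variation comparison $\PP(v+G(v,\e W)\in D^c) \gqq (1-C)\frac{h_\e}{\beta_\e}\mu(E^{D^c}(v))$ uniform over the compact set $\aA$. Similarly the $B$-and-$O$ factor is bounded below by $(1-C)^2\frac{\beta_\e}{\theta Q h_\e+\beta_\e}\frac{Q(U\cap D^c)h_\e}{\beta_\e}$.

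Finally I would sum the geometric series: the lower bound becomes
\begin{align*}
\inf_{y\in D_{\delta_\e}}\EE[e^{-\theta Q h_\e\TT_y}\ind\{X^\e_{\TT_y,y}\in U\}]
&\gqq (1-C)^2\frac{\beta_\e}{\theta Q h_\e+\beta_\e}\frac{Q(U\cap D^c)h_\e}{\beta_\e}
\sum_{k=1}^\infty\Big(\frac{\beta_\e}{\theta Q h_\e+\beta_\e}\big(1-(1+C)^2\tfrac{Q h_\e}{\beta_\e}\big)\Big)^{k-1} - o(1),
\end{align*}
and evaluating the geometric sum exactly as in the upper bound gives, after $\e\to 0$ so that $\beta_\e/h_\e\to 1/Q(D^c)\cdot Q(D^c)=$ the right normalization, a leading term $\frac{Q(U\cap D^c)}{Q(D^c)}\cdot\frac{1}{\theta+(1+C)^2}\gqq \frac{Q(U\cap D^c)}{Q(D^c)}\cdot\frac{1-C}{1+\theta+C}$ for $C$ small, absorbing the $o(1)$ and the stray constants by renaming $C$. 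The main obstacle I anticipate is the bookkeeping in the strong Markov recursion: unlike the upper bound, where the ``bad'' event $\{V_{T_1}\notin D_{\delta_\e}\}$ is simply added with a plus sign, for the lower bound one must ensure that restricting to $\{V_{T_1}\in D_{\delta_\e}\}$ at every step does not accumulate an error growing with $k$ faster than the geometric factor decays — this is controlled because $\kappa_\e = \lceil 1/h_\e\rceil$ truncates the sum and $e^{-\delta_\e^{-1}}/(\beta_\e\delta_\e)^2$ beats any polynomial-in-$1/h_\e$ number of terms by the choice $\delta_\e=\e^\gamma$, $\rho^\e=\e^{-\rho}$ satisfying (\ref{eq: exponentiell delta beta}).
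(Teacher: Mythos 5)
Your proposal mirrors the paper's own lower-bound argument essentially step for step: restrict to the events $\{\TT_y = T_k\}$, run the strong Markov recursion at the jump times with a ``good'' return event to $D_{\delta_\e}$, invoke Proposition~\ref{prop: ergodicity} and the regular-variation comparison for each factor, sum the geometric series, and rename $C$. One small point to tighten: the return event inserted in the recursion must constrain the \emph{post-jump} location $X^\e_{T_1,y}$ --- this is exactly the paper's modified event $A^-_y$, which requires $X^\e_{T_1,y}\in D_{\delta_\e}$ --- rather than the pre-jump location $V^\e_{T_1,y}$; only the former makes the strong Markov property yield an infimum over $D_{\delta_\e}$ in the next step. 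Your narrative already acknowledges that the post-jump position is what matters, so the fix is notational, but the displayed indicator $\ind\{V^\e_{T_1,y}\in D_{\delta_\e}\}$ and the appeal to Corollaries~\ref{cor: non-exit} and \ref{cor: at jump time in D delta} (which control $V^\e_{T_1}$, not $X^\e_{T_1}$) should be replaced by the $A^-_y$ formulation before the recursion can close.
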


\begin{proof} We keep the notation introduced in the proof of Proposition \ref{prp: the upper bound}.  
We define the following events for $y\in D_{\delta_\e}$ and $t, s\gqq 0$ by 
\begin{align*}
A^-_{t,s,y} &= \{X_{r,\cdot}\circ \theta_{s}(y)\in D \mbox{ for all } r\in [0, t) 
\mbox{ and }X_{t,\cdot}\circ \theta_{s}(y)\in D_{\delta_\e}\}
\end{align*}
and the abbreviation 
\[
A^-_{y} = A^-_{T_1, 0, y}.
\]
The identical strong Markov property estimates from below (\ref{eq: Strong Markov estimate}) and (\ref{eq: Strong Markov estimate recursion}) 
as in the proof of Proposition \ref{prp: the upper bound} only with inverted inequalities and neglecting all the nonnegative error terms 
yields 
\begin{align*}
& \inf_{y\in D_{\delta_\e}} \EE\left[ e^{-\theta Q h_\e \TT_y}  \ind\{X^\e_{\TT_y ,y}\in U\}\right]\\
& ~\gqq ~\sum_{k=1}^\infty \left(\inf_{y\in D_{\delta_\e}} \EE\left[ e^{-\theta Q h_\e T} \bI(A_y^{-}) \right]\right)^{k-1} 
\inf_{y\in D_{\delta_\e}} \EE\left[ e^{-\theta Q h_\e T} \bI(B_y) \ind(D^{U}_y)\right] .
\end{align*} 
Proposition \ref{prop: ergodicity} yields a constant $\e_0\in (0,1)$ such that for $\e\in (0, \e_0]$ 
\begin{align*}
\inf_{y\in D_{\delta_\e}}\EE\left[ e^{-\theta Q h_\e T} \bI(A_y^-) \right]
& \gqq  \frac{\beta_\e}{\theta Q h_\e + \beta_\e} (1- (1+C)^2\frac{Q h_\e}{\beta_\e} (D^c))\\
\end{align*}
and 
\begin{align*}
\inf_{y\in D_{\delta_\e}}\EE\left[ e^{-\theta Q h_\e T} \bI(B_y) \bI(D^{U}_y)\right]
& \gqq (1-C)^2 \frac{\beta_\e}{\theta Q h_\e + \beta_\e} \frac{h_\e}{\beta_\e} Q(D^c \cap U).
\end{align*}
This eventually implies a constant $\e_0 \in (0,1)$ such that for $\e\in (0, \e_0]$ follows 
\begin{align*}
& \inf_{x\in D_{\delta_\e}}\EE\left[ e^{-\theta Q h_\e \TT_{x}}\ind\{X_{\TT_{x} ,x}\in U\}\right] \\
&\qquad \gqq (1-C)^2 \frac{\beta_\e}{\theta Q h_\e + \beta_\e}\frac{h_\e}{\beta_\e} Q(U\cap D^c) 
~\sum_{k=1}^\infty \left(\frac{\beta_\e}{\theta Q h_\e + \beta_\e} (1- (1+C)^2\frac{h_\e}{\beta_\e} Q)\right)^{k-1} \\[2mm]
&\qquad = \frac{Q(U \cap D^c)}{Q(D^c)} \frac{(1-C)^2}{\theta + (1 +C)^2}.
\end{align*}
An appropriate renaming of the constant $C$ finishes the proof. 
\end{proof}

\bigskip

\begin{proof} (Theorem \ref{thm: first exit times}) For $\rho \in (0,\frac{1}{2})$ we define $\rho^\e = \e^{-\rho}$ and 
verify condition (\ref{eq: rho-delta limit}) for $p=4$ and (\ref{eq: exponentiell delta beta}) 
for the choice of $\rho^\e$ and $\delta_\e$.  
\[
\frac{\e \rho^\e}{\delta_\e^{(p+1)/2}} = \e^{1-\rho - 5/2} \ra 0, \mbox{ as }\e \ra 0+. 
\]
Since for small $\e$ the intensity $\beta_\e \approx_\e \e^{\alpha \rho} \ell(\frac{1}{\e^\rho}) \mu(\bB_1^c(0))$ 
is asymptotically dominated by a polynomial order just as $\delta_\e$, the reasoning is reduced to the fact that 
the exponential convergence of $e^{-\delta_\e}$ dominates $(\delta_\e \beta_\e)^{-2}$ in the limit as $\e\ra 0+$. 
This implies relation (\ref{eq: exponentiell delta beta}). 
Therefore the upper bound by Proposition \ref{prp: the upper bound} and the lower bound 
by Proposition \ref{prp: the lower bound} are satisfied, which yields the desired result. 
\end{proof}


\end{document}